\newcommand\bes{\begin{eqnarray}}
\newcommand\ees{\end{eqnarray}}
\newcommand\R{\mathbb R}
\newtheorem{theorem}{Theorem}[section]
\newtheorem{lemma}[theorem]{Lemma}
\newtheorem{remark}[theorem]{Remark}
\numberwithin{equation}{section}
\begin{document}

\title[Accelerations of Propagation in a chemotaxis-growth system]{Acceleration of Propagation in a chemotaxis-growth system with slowly decaying initial data}

\author[Zhi-An Wang, Wen-Bing Xu]{Zhi-An Wang$^1$ and Wen-Bing Xu$^{2,*}$\\
}
\thanks{\hspace{-.53cm}
$^1$ Department of Applied Mathematics, The Hong Kong Polytechnic University, Hung Hom, Kowloon, Hong Kong; mawza@polyu.edu.hk\\
$^2$ School of Mathematical Sciences, Capital Normal University, Beijing 100048,  People's Republic of China; xuwb15@lzu.edu.cn
}

\begin{abstract}
In this paper, we study the spatial propagation dynamics of a parabolic-elliptic chemotaxis system with logistic source which reduces to the well-known Fisher-KPP equation without chemotaxis.  It is known that for fast decaying initial functions, this system has a finite spreading speed. For slowly decaying initial functions, we show that the accelerating propagation will occur and chemotaxis does not affect the propagation mode determined by slowly decaying initial functions if the logistic damping is strong, that is,  the system has the same upper and lower bounds of the accelerating propagation as for the classical Fisher-KPP equation. The main new idea of proving our results is the construction of auxiliary equations to overcome the lack of comparison principle due to chemotaxis.

\vspace{1em}
\noindent {\sc Keywords}:  Chemotaxis model, logistic source, acceleration propagation, slowly decaying initial functions

\noindent {\sc Mathematics Subject Classification numbers}: 35B40, 35K57, 35Q92, 92C17.
\end{abstract}
\maketitle
\section{Introduction}
Chemotaxis, as a strategy of migration, describes the  directional  movement of cells along a chemical concentration gradient. It was well-known that this process can promote the rapid propagation of bacterial populations into previously unoccupied territories (cf. \cite{budrene1995dynamics, Adler66, Adler69, saragosti2011directional, liu2011sequential}).  The propagation of migrating bands of bacterial chemotaxis was first observed in the experiment by Adler \cite{Adler66} and the first mathematical model was proposed by Keller and Segel \cite{KS-tws} to recover the migrating bands of bacterial chemotaxis by a pair of reaction-diffusion-convection equations (nowadays well-known as singular Keller-Segel system) as follows
\begin{equation}\label{ks}
\left\{\begin{array}{ll}
u_{t}=\Delta u-\chi \nabla \cdot(\frac{u}{v} \nabla v), \\
v_{t}=\varepsilon \Delta v-u v^{m},
\end{array}\right.
\end{equation}
 where $u(x,t)$ denotes the bacterial density and $v(x,t)$ the chemical (oxygen) concentration at position $x$ and at time $t>0$, respectively. $\chi>0$ is the chemotactic coefficient and $\varepsilon\geq 0$ denotes the chemical diffusivity. The Keller-Segel System \eqref{ks} has attracted extensive studies generating  a large number of beautiful mathematical results on the existence and stability of traveling wave solutions (cf. \cite{Odell75, LuiWang, NagaiIkeda, Chae, Wang-TWS-DCDSBrev, LW09, LLW, Davis}) as well as global solvability (cf. \cite{hou2019convergence, HLWW, LPZ, MWZ}). In order to generate traveling bands, the Keller-Segel system \eqref{ks} requires a singular chemotactic sensitivity for sufficiently small chemical concentration, which is, however, unrealistic since cells cannot perform chemotaxis when concentrations fall below detectable values (cf. \cite{narla2021traveling}). Furthermore, this model neglected cell growth, a substantial factor in the expansion process (cf. \cite{narla2021traveling}). Subsequently various models including cell growth but without singular sensitivity have been proposed and tested against numerical simulations (cf. \cite{landman2005diffusive, lauffenburger1984traveling, lapidus1978model, tindall2008overview}). In this paper we are concerned with the spatial propagation dynamics of the following classical chemotaxis system with logistic growth
\begin{equation}\label{1.0}
\left\{
\begin{aligned}
&u_t= \Delta u-\chi \nabla \cdot(u\nabla v)+ u(a-bu),&&x\in \Omega,~t>0,\\
&\tau v_t=\Delta v-\lambda v+\mu u,&&x\in  \Omega,~t>0,
\end{aligned}
\right.
\end{equation}
where $\chi, a, b, \lambda$ and $\mu$ are positive constants,  $\tau$ is a nonnegative constant, $u(x,t)$ denotes the cell density and $v(x,t)$ the chemical concentration at position $x$ and at time $t>0$. The system \eqref{1.0} models the movement of cells directed by the higher concentration of chemoattractant emitted from cells. The constant $\chi$ is called the chemotactic coefficient, $\lambda$ is the degradation rate of chemoattractant, $\mu$ is the rate by which cells produce chemoattractant, the constant $1/\tau$ in the case $\tau>0$ measures the diffusion rate of  chemoattractant. It has been numerically demonstrated in \cite{painter2011spatio} that the model \eqref{1.0} can produce various intricate patterns including traveling waves.
When $b$ is suitably large, the global existence and stabilization of solutions to \eqref{1.0} can be ensured in $\R^n (n\geq 2)$ or in a bounded domain of $\R^n (n\geq 2)$ with Neumman boundary conditions (cf. \cite{Winkler2010, OsakiTsujikawaYagiMimura2002} for  $\tau>0$ and \cite{TelloWinkler2007,SS2017,IssaShen2017} for  $\tau=0$, and references therein).

Except the global solvability and stabilization, the spatial propagation dynamics  of \eqref{1.0} is another interesting research topic and not many results are available in the literature. In this paper, we will investigate the spatial propagation dynamics of \eqref{1.0}  in $\R$ with $\tau=0$, namely
\begin{equation}\label{1.1}
\left\{
\begin{aligned}
&u_t= u_{xx}-\chi (uv_x)_x+u(a-bu),&&x\in\mathbb R,~t>0,\\
&0=v_{xx}-\lambda v+\mu u,&&x\in\mathbb R,~t>0,\\
&u(x,0)=u_0(x),&&x\in\mathbb R.
\end{aligned}
\right.
\end{equation}
Throughout the paper, we assume  that the  initial   function satisfies
\begin{equation}\label{1.2}
u_0\in C(\mathbb R),~u_0(x)>0~\text{for}~x\in\mathbb R,~\liminf\limits_{x\rightarrow-\infty}u_0(x)>0,~\text{and}~ u_0(x)\rightarrow 0~\text{as}~x\rightarrow +\infty.
\end{equation}
There are two classes of initial function $u_0$ that are commonly used  in the literature as follows:
\begin{itemize}
  \item[(a)] \textbf{fast decaying initial function}, namely, there is   $\beta>0$ and $C>0$ such that
      \begin{equation}\label{1.98}
      u_0(x)\leqslant C e^{-\beta x}~\text{for   large}~x,
      \end{equation}
  \item[(b)]  \textbf{slowly  decaying initial function}, namely,  there is a large constant $\xi_0$ such that
      \begin{equation}\label{1.10}
      u_0\in C^2([\xi_0,+\infty)),~u_0'\leqslant0~\text{in}~[\xi_0,+\infty),~\text{and}~u_0''(x)/ u_0(x)\rightarrow 0~\text{as}~x\rightarrow +\infty.
      \end{equation}
\end{itemize}
By Lemma \ref{lem9.0} (ii) below, when $u_0$ satisfies \eqref{1.2} and \eqref{1.10}, $u_0$ decays more slowly than any exponentially decaying function as $x\rightarrow+\infty$, namely,
\[
\forall~\kappa,~\exists~x_\kappa~\text{s.t.}~u_0(x)\geqslant e^{-\kappa x}~~\text{for all}~~x\in[x_\kappa,+\infty).
\]

In the case $\chi=0$,   \eqref{1.1} becomes the following well-known Fisher-KPP equation
\begin{equation}\label{1.97}
\left\{
\begin{aligned}
&u_t= u_{xx}+u(a-bu),&&x\in\mathbb R,~t>0,\\
&u(x,0)=u_0(x),&&x\in\mathbb R.
\end{aligned}
\right.
\end{equation}
The spatial propagation dynamics of \eqref{1.97} with different initial functions  has been extensively studied as one of the prevailing research topics in the past few decades. For example,
for  fast  decaying initial functions, if $u_0(x)\leqslant c_0 e^{- \sqrt{a} x}$ with $c_0>0$ for   large $x$, then \eqref{1.97} has a spreading speed $2\sqrt{a}$ (see \cite{kolmogorov1937study,AW1975,AW1978}), and if there are two constants $c_1,c_2\in(0,+\infty)$ such that  $c_1\leqslant u_0(x) e^{\kappa x}\leqslant c_2 $ with $\kappa\in(0,\sqrt{a})$ for  large $x$, then \eqref{1.97} has a spreading speed $\kappa+a/\kappa$ (see \cite{McKean1975,Sattinger1976,BootyHabermanMinzoni1993,Lau1985,HamelGadin2012}),
where a constant  $c^*>0$ is called a \emph{spreading speed}  if $u(x,t)$ satisfies that
\[
\left\{
\begin{aligned}
&\lim\limits_{t\rightarrow\infty}\sup\limits_{x\geqslant ct} |u(x,t)|=0\quad \text{for any}~c>c^*,\\
&\liminf\limits_{t\rightarrow\infty}\inf\limits_{x\leqslant ct}u(x,t)>0\quad \text{for any}~c<c^*.
\end{aligned}
\right.
\]
We refer to \cite{Weinberger1982,Lui1989,LiangZhao2007,LiangZhao2010} for  more results on the  spreading speed of discrete-time recursion equations which can include \eqref{1.97} as a  special  example.
On the other hand, for slowly decaying initial functions,  as shown in \cite{HamelRogues2010} by Hemal and Roques,  \eqref{1.97} has a new  propagation mode-\emph{acceleration propagation}, which is quite different from the propagation resulting from the fast  decaying initial functions. They provided an accurate description locating the moving level set
$$E_{\omega}(t)=\{x\in\mathbb R, u(x,t)=\omega\}$$
by showing that for any $\gamma_1,\gamma_2>0$, $\epsilon\in(0,a)$, and $\omega\in(0,a/b)$, there exists $T>0$ such that
\begin{equation}\label{1.90}
E_{\omega}(t)\subseteq u_0^{-1}\left\{\big[ \gamma_1e^{-(a+\epsilon)t}, \gamma_2e^{-(a-\epsilon)t}\big]\right\}~\text{for all}~t\geqslant T,
\end{equation}
where $u_0^{-1}\{A\}=\{x\in\mathbb R, u_0(x)\in A\}$ denotes  the inverse image of $u_0$ from the set $A$.
We refer to \cite{Alfaro2017,Henderson2016,HeWuWu2017} for the acceleration propagation results recently developed for more general reaction-diffusion equations.
We also refer to \cite{CabreRoquejoffre2013,CoulonRoquejoffre2012,FelmerYangari2013} for the acceleration propagation in fractional diffusion equations and \cite{Garnier2011,AlfaroCoville2017,FinkelshteinTkachov2019} for nonlocal dispersal equations.

Compared  to the Fisher-KPP equation,   the results on  the spatial propagation dynamics of  \eqref{1.1} with $\chi>0$ are much less. It was first shown in \cite{ryzhik2008traveling} that for any $0<\chi<1$, there is a wave speed $c\in [2, 2+\frac{\chi}{1-\chi}]$ such that \eqref{1.1} with $a=b=1$ admits traveling wave solutions connecting $(1,1)$ to $(0,0)$. Salako and Shen first studied the Cauchy problem of \eqref{1.1}  in \cite{SS2017} and identified the minimal wave speed of traveling wave solutions in \cite{Salako-DCDS-2017}. When $u_0$ is a fast decaying initial function, they further studied the existence of spreading speed of \eqref{1.1} in \cite{SSX2019}. To summarize their main results in \cite{SSX2019}, we introduce a set
\[ K=\bigg\{z\in\mathbb R~\big|~1+\frac{(z-\sqrt{\lambda})_+}{2(z+\sqrt{\lambda})}\leqslant \frac{b}{\chi\mu}\bigg\},
\]
which is equivalent to
\[
K=\left\{
\begin{aligned}
&\varnothing,&&\text{when}~~b<\chi\mu,\\
&\left(-\infty,~\frac{2b-\chi\mu}{3\chi\mu-2b}\sqrt{\lambda}\right),&&\text{when}~~\chi\mu\leqslant b<\frac{3}{2}\chi\mu,\\
&\mathbb R,&&\text{when}~~b\geqslant \frac{3}{2}\chi\mu.
\end{aligned}
\right.
\]
We denote the spreading speed of \eqref{1.1} by a positive constant $c^*$. When $b\geq \chi \mu$ and $u_0$ is a fast decaying initial function, it was shown in \cite{SSX2019} that $c^*=2\sqrt{a}$ if $\sqrt{a}\in K$ and $u_0(x)=0$ for large $x$, while $c^*=\kappa+a/\kappa$ if $u_0(x)\rightarrow e^{-\kappa x}$ as $x\rightarrow+\infty$ with   $\kappa\in(0,\sqrt{a})\cap K$. Compared to the results for the Fisher-KPP equation \eqref{1.97}, it was found that the chemotaxis neither asymptotically speeds up nor  slows down the spreading speed for fast decaying initial functions when the chemotactic sensitivity is weak (i.e. $\chi\leq \frac{b}{\mu}$). However the case of strong chemotaxis (i.e. $\chi>\frac{b}{\mu}$) was left open. Similar but weaker results for the parabolic-parabolic version of  \eqref{1.1} have been obtained in \cite{salako2018existence, salako2020traveling}.

If we solve $v$ in terms of $u$ by the constant of variations,  we can reformulate the system \eqref{1.1} into a scalar Fisher-KPP type equation with non-local advection as follows
\begin{equation}\label{1.92}
u_t+[(K \ast u)u]_x=u_{xx}+u(1-u),
\end{equation}
where we have assumed $a=b=1$ without loss of generality and
\begin{equation}\label{K}
K(x)=-\frac{1}{2}\chi\mu~\text{sign}(x) e^{-\sqrt{\lambda}|x|}, \ x\in \mathbb{R}.
\end{equation}
In  a recent work \cite{HamelHenderson2020}, the authors studied the spatial propagation dynamics of \eqref{1.92}  with more general kernel function $K$ for non-zero compactly supported initial function.
When $K$ is large negative chemotaxis, the acceleration propagation driven  by the ``heavy-tailed''   $K$ was obtained  in \cite{HamelHenderson2020} asserting that if $K\in L^p(\mathbb R)$ with $p>1$, the position of the ``front'' is of order $O(t^p)$ if $p<\infty$ and $K(x)\geqslant(1+x)^{-\alpha}$ with $\alpha\in(0,1)$,
and of order $O(e^{\lambda t})$ for some $\lambda>0$ if $p=+\infty$ and $K(+\infty)>0$. On the other hand, when $K\in L^1(\mathbb R)$ and $K=\bar K^{-1}$ for some kernel $\bar K\in W^{1,1}(\mathbb R)$, namely $K$ is ``light-tailed'', only explicit upper and lower bounds on the spreading speed were obtained. Hence in the special case that $K(x)$ of $L^1$-class given in \eqref{K}, the results of \cite{SSX2019} presented a more refined dynamics by finding the precise spreading speed.

As recalled above, we find that if $u_0$ is a slowly decaying initial function, there
is no result about the spatial propagation dynamics of  \eqref{1.1} with $\chi\ne0$, and the question whether or not the acceleration propagation occurs and how the chemotaxis affects (speeds  up  or  slows down) the spatial propagation dynamics of  \eqref{1.1} is unknown.
We shall explore these questions in this paper and our main results are stated in the following theorem.
\begin{theorem}\label{th1}
Assume $b>2\chi\mu$ and $u_0$ satisfies \eqref{1.2} and \eqref{1.10}. Then the following results hold.
\begin{itemize}
  \item[(i)] The solution of \eqref{1.1} satisfies
     \[
     \forall~ t\geqslant0,~\lim_{x\rightarrow+\infty} u(x,t)=0,~\text{and}~\liminf_{x\rightarrow-\infty}u(x,t)\rightarrow a/b~\text{as}~t\rightarrow+\infty.
     \]
  For any $\omega \in(0,a/b)$, there exists $T_\omega\geqslant0$ such that the set $E_{\omega}(t)$ with $t\geqslant T_{\omega}$ is compact and nonempty, where
     $E_{\omega}(t)=\{x\in\mathbb R, u(x,t)=\omega\}$.
  \item[(ii)] For any $\epsilon\in(0,a)$, $\gamma_1>0$, and $\gamma_2>0$, if $\zeta(t)$ and $\eta(t)$   satisfy
      \[
      u_0(\zeta(t))=\gamma_1 e^{-(a+\epsilon)t},~~
      u_0(\eta(t))=\gamma_2 e^{-(a-\epsilon)t}\quad\text{for}~t~\text{large enough},
      \]
      then for any $\omega\in(0,a/b)$, there is a constant $T\geqslant T_{\omega}$ such that
      \[
      E_\omega(t)\subseteq [\eta(t),\zeta(t)]~\text{for all}~t\geqslant T.
     \]
  \item[(iii)]  For any $\omega\in(0, a/b)$, we have that
  \[
  \lim\limits_{t\rightarrow+\infty}\frac{\inf\{E_\omega(t)\}}{t}=+\infty.
  \]
\end{itemize}
\end{theorem}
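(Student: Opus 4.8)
The plan is to reduce \eqref{1.1} to a single scalar equation for $u$ in which $v$ and $v_x$ appear only as coefficients, and then use the comparison principle \emph{for that scalar equation} — which is available, in contrast with the one for the full chemotaxis system — against Fisher--KPP type \emph{auxiliary equations}. Solving the elliptic equation gives $v=\frac{\mu}{2\sqrt\lambda}\,e^{-\sqrt\lambda|\cdot|}\ast u$, whence $0\le v\le\frac{\mu}{\lambda}\|u(\cdot,t)\|_\infty$, $|v_x|\le\frac{\mu}{\sqrt\lambda}\|u(\cdot,t)\|_\infty$, and $v-\frac{\mu}{\lambda}u=\frac1\lambda v_{xx}$; substituting $v_{xx}=\lambda v-\mu u$ turns the first equation of \eqref{1.1} into
\[
u_t=u_{xx}-\chi v_x u_x+u(a-\chi\lambda v-(b-\chi\mu)u).
\]
Dropping $-\chi\lambda v\le0$ and comparing with the logistic ODE gives $0<u\le M$, with $M$ eventually $a/(b-\chi\mu)$, hence $0\le v\le\frac{\mu}{\lambda}M$, and $v_x,u_x,u_{xx}$ bounded for $t\ge1$ by parabolic regularity (global existence being known from \cite{SS2017}). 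Since $b>2\chi\mu$, the number $a'':=a-\chi\mu M=a(b-2\chi\mu)/(b-\chi\mu)$ is positive: this is the only place where $b>2\chi\mu$ enters, and it guarantees that the reformulated equation is eventually a bona fide Fisher--KPP equation with bounded drift.

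For the upper statements — $u(\cdot,t)\to0$ as $x\to+\infty$ and $E_\omega(t)\cap(\zeta(t),+\infty)=\varnothing$ — I would take $\bar u$ to solve $\bar u_t=\bar u_{xx}-\chi C_0\bar u_x+\bar u(a-(b-\chi\mu)\bar u)$, $C_0:=\sup|v_x|$, with $\bar u(\cdot,0)$ the least nonincreasing majorant of $u_0$ (same decay at $+\infty$). Because $-\chi\lambda v\le0$, $|{-}\chi v_x\bar u_x|\le\chi C_0|\bar u_x|=-\chi C_0\bar u_x$ and $\bar u$ is nonincreasing, $\bar u$ is a supersolution of the scalar equation with $u$'s own coefficients, so $u\le\bar u$. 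A constant drift and the replacement $b\mapsto b-\chi\mu$ change neither the linearization rate $a$ at $u=0$ nor (up to the $O(t)$ shift, which is negligible against $u_0^{-1}(e^{-ct})$) the conclusion of \eqref{1.90}; applying it to $\bar u$ gives $\sup E^{\bar u}_\omega(t)\le\zeta(t)$ for $\omega\in(0,a/b)\subset(0,a/(b-\chi\mu))$, hence $u<\omega$ for $x>\zeta(t)$, while $\bar u\le e^{at}(G\ast u_0)$, with $G$ the heat kernel, yields the decay at $+\infty$. Symmetrically, a nonincreasing subsolution of the Fisher--KPP equation with drift $+\chi C_0$, rate $a''$ and coefficient $b-\chi\mu$ furnishes a lower barrier $\le u$ and, together with $\bar u$, locates the transition zone of $u$ between $u_0^{-1}(e^{-(a+\epsilon)t})$ and $u_0^{-1}(e^{-(a''-\epsilon)t})$ up to lower-order terms.

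The heart of the matter, and the step I expect to be the main obstacle, is to upgrade this crude localisation into the sharp \emph{sandwiching}: for every $\epsilon>0$ there is $T_\epsilon$ such that
\[
u_{xx}+u((a-\epsilon)-bu)\ \le\ u_t\ \le\ u_{xx}+u((a+\epsilon)-bu)\qquad\text{for }t\ge T_\epsilon.
\]
This rests on the identity $v-\frac{\mu}{\lambda}u=\frac1\lambda v_{xx}=\frac{\mu}{2\sqrt\lambda}\int e^{-\sqrt\lambda|z|}(u(\cdot-z,t)-u(\cdot,t))\,dz$, which is $\le C\epsilon\,u$ wherever $u$ changes by at most a factor $1+\epsilon$ over the fixed length $1/\sqrt\lambda$ — and then $v_x-\frac{\mu}{\lambda}u_x$, hence $\chi v_x u_x$, are $\le C\epsilon\,u$ there as well — whereas in the remote tail, where the bulk is far away, $v$ and $v_x$ are directly exponentially small. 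So what must be proved is that, for $t$ large, $u(\cdot,t)$ is this slowly varying \emph{uniformly in $x$}; this I would obtain from a front-widening fact: by the two crude barriers above and the property (Lemma~\ref{lem9.0}) that $u_0$ decays more slowly than every exponential, the interval between $u_0^{-1}(e^{-(a''-\epsilon)t})$ and $u_0^{-1}(e^{-(a+\epsilon)t})$ has length $\to+\infty$, so the relative oscillation of $u$ over any fixed length tends to $0$, and Schauder estimates promote this $C^0$ flatness to $C^1$ control of $u_x/u$. Feeding this back into the reformulated equation, where $v$ and $v_x$ are now reduced to $\frac{\mu}{\lambda}u$ and $\frac{\mu}{\lambda}u_x$ up to an $O(\epsilon u)$ error, produces the displayed two-sided Fisher--KPP comparison.

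With the sandwiching in hand, the rest is book-keeping. Comparing $u$ from below and above with the Fisher--KPP solutions of rates $a\mp\epsilon$ and coefficient $b$, started at $T_\epsilon$ from a nonincreasing minorant, resp.\ majorant, of $u(\cdot,T_\epsilon)$ carrying the $u_0$-tail, gives $(a-\epsilon)/b\le\liminf_{x\to-\infty}u(x,t)\le\limsup_{x\to-\infty}u(x,t)\le(a+\epsilon)/b$ for $t$ large, so $\liminf_{x\to-\infty}u(x,t)\to a/b$; combined with the two barriers of the second paragraph, which give $E_\omega(t)\subset[\eta(t),\zeta(t)]$, nonemptiness and compactness of $E_\omega(t)$ for $t\ge T_\omega$ then follow from continuity (and $u(x,0)=u_0(x)\to0$ handles $t=0$). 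For (ii), decreasing $\epsilon$ only shrinks the target interval $[\eta(t),\zeta(t)]$, so we may assume $\omega<(a-\epsilon)/b$; applying the sandwiching with parameter $\epsilon/2$ and comparing $u$ from below with the Fisher--KPP solution of rate $a-\epsilon/2$ (stable state $(a-\epsilon/2)/b>\omega$), and invoking \eqref{1.90} with slack $\epsilon/2$, puts the $\omega$-level set of that barrier to the right of $u_0^{-1}(\gamma_2e^{-((a-\epsilon/2)-\epsilon/2)t})=\eta(t)$, so $u>\omega$ on $(-\infty,\eta(t))$ and $\inf E_\omega(t)\ge\eta(t)$; together with $\sup E_\omega(t)\le\zeta(t)$ this is the claim, and the arbitrariness of $\epsilon$ is exactly what makes the rate $a$ attainable. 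Finally (iii) follows at once: by (ii) $u_0(\inf E_\omega(t))\le\gamma_2e^{-(a-\epsilon)t}$, and since $u_0(x)\ge e^{-\kappa x}$ for large $x$ for every $\kappa>0$ by Lemma~\ref{lem9.0}(ii), $\inf E_\omega(t)/t\ge(a-\epsilon)/\kappa-o(1)$ for every $\kappa>0$, whence $\inf E_\omega(t)/t\to+\infty$.
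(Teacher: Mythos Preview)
Your upper-bound strategy is essentially the paper's: bound $v,v_x$ via the convolution formula and compare $u$ from above with a Fisher--KPP-type equation of linear rate $a$. The paper works with auxiliary equations carrying $|w_x|$ (so as not to track monotonicity) and an explicit supersolution $\min\{Cu_0(x)e^{(a+\epsilon/2)t},\theta(t)\}$ rather than invoking \eqref{1.90} for a drifted equation, but your version is defensible.

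The genuine gap is in the lower bound. Your ``sharp sandwiching'' step rests on the implication: the transition zone $[u_0^{-1}(e^{-(a''-\epsilon)t}),\,u_0^{-1}(e^{-(a+\epsilon)t})]$ widens, hence ``the relative oscillation of $u$ over any fixed length tends to $0$'', hence $v\approx\tfrac{\mu}{\lambda}u$ and $\chi v_xu_x=O(\epsilon u)$. That implication is a non-sequitur. Your crude barriers satisfy $\bar u/\underline u\sim e^{(a-a'')t}\to\infty$ in the transition zone, so they impose no control whatsoever on the local profile of $u$; and parabolic Harnack gives bounded relative oscillation, not small. Nothing in your argument forces $|u_x|/u$ or $|u(x+h,t)/u(x,t)-1|$ to be uniformly small. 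Without this the reduction to Fisher--KPP with rate $a\pm\epsilon$ collapses, and with it your proofs of (ii), of the sharp part of (i), and of the upgrade of the lower rate from $a''$ to $a-\epsilon$.

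The paper never tries to show $u$ is slowly varying. Two devices replace your sandwiching. First, an unconditional pointwise estimate $\chi\lambda v(x,t)\le C_{r,p}\,u(x,t)^{1/p}+q\varepsilon_r$ (borrowed from \cite{SSX2019}) converts the reaction into $(a-\epsilon/2)u-Mu^{1+\delta}$ and permits an explicit subsolution $\max\{u_0e^{\rho t}-B(u_0e^{\rho t})^{1+\delta},0\}$ with any $\rho<a-\epsilon/2$; this already gives the sharp rate and $u\ge F_0>0$ on $\{x\le z(t)\}$ with $u_0(z(t))=\text{const}\cdot e^{-\rho t}$. Second, a compactness/Liouville step: along any $(x_n,t_n)$ with $x_n\le\eta(t_n)$, shifted solutions converge to an entire solution of \eqref{1.1} bounded below by $F_0$, which by the uniqueness of the positive steady state (Lemma~\ref{lem9.1}) must be $\equiv a/b$. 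This is where $b>2\chi\mu$ is actually used in the paper --- through the stabilization in Lemma~\ref{lem9.2} feeding Lemma~\ref{lem9.1} --- and it is what upgrades $F_0$ to $a/b$, yielding $E_\omega(t)\ne\varnothing$ for every $\omega\in(0,a/b)$ and $\inf E_\omega(t)\ge\eta(t)$. Your identification of $b>2\chi\mu$ as ``only'' the positivity of $a''$ is therefore also off the mark: you need it merely for a crude barrier, whereas the paper needs it for the Liouville step that delivers the exact constant $a/b$.
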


\noindent {\bf Main proof ideas}. The main difficulty of proving Theorem \ref{th1} lies in the failure of comparison principle of \eqref{1.1} due to the presence of chemotaxis which generates  a cross diffusion. To overcome this obstacle, we construct the following two auxiliary equations for which the comparison principle  holds (see Lemma \ref{le2.1})
\begin{align}
&w_t=w_{xx}+\frac{\chi\mu L}{\sqrt{\lambda}} |w_x|+ w(a-(b-\chi\mu) w), \label{1.94}\\
&w_t=w_{xx}-\frac{\chi\mu L}{\sqrt{\lambda}} |w_x|+ (a-\epsilon/2)w-Mw^{1+\delta},  \label{1.93}
\end{align}
where $L$, $M$, and $\delta$ are some appropriate positive constants. By some estimates, we find that the solution of \eqref{1.1} is  a lower solution of \eqref{1.94} but an upper solution of \eqref{1.93}.
Then by constructing  an upper solution $\bar w$ of \eqref{1.94} and a lower solution $\underline w$ of  \eqref{1.93}, we can employ  comparison principles to \eqref{1.94} and \eqref{1.93} and conclude that $\underline w\leqslant u\leqslant \bar w$. This is the primary new idea of this paper to overcome  the lack of comparison principle in \eqref{1.1} and hence to prove Theorem \ref{th1}.
\medskip

We give several remarks to further highlight some results in Theorem \ref{th1}.
\begin{remark}\em{
Theorem \ref{th1} (ii) provides a lower bound $\eta(t)$ and an upper bound $\zeta(t)$ of   $E_{\omega}(t)$   when $t$ is large enough.
Moreover, for any  $\epsilon\in(0,a)$, $\gamma_1,\gamma_2>0$, and $\omega \in(0,a/b)$, it follows from Theorem \ref{th1} (ii) that
\begin{equation}\label{1.91}
E_\omega(t)\subseteq [\eta(t),\zeta(t)]\subseteq u_0^{-1}\left\{\big[\gamma_1 e^{-(a+\epsilon)t},\gamma_2 e^{-(a-\epsilon)t}\big]\right\}~\text{for}~t~\text{large enough}.
\end{equation}
Theorem \ref{th1} (iii) shows that the average moving speed of the level set $E_{\omega}(t)$ tends to infinity as $t\rightarrow+\infty$, namely, the acceleration propagation occurs. Therefore, compared to \cite{HamelRogues2010}, our results show that if chemotaxis is not strong (i.e. $\chi<\frac{ b}{2\mu}$), then it does not affect the spatial propagation dynamics of \eqref{1.1} with slowly decaying initial functions, in the sense that the estimates of upper and lower bounds of the acceleration propagation are the same as those for the Fisher-KPP equation \eqref{1.97} (see \eqref{1.90}). However, if chemotaxis strength is strong (i.e. $\chi\geq\frac{ b}{2\mu}$), it is unknown whether it has any effect on the spatial propagation dynamics of  \eqref{1.1}.
}
\end{remark}

\begin{remark}\em{
As stated in   \cite{HamelRogues2010}, \eqref{1.90}  provides
explicit upper and lower bounds locating the moving level set $E_\omega(t)$ of \eqref{1.97} for different  examples  of $u_0$. The results in Theorem \ref{th1} (ii) and \eqref{1.91} show  that these examples also work for \eqref{1.1}, which are summarized as follows  for the purpose of lucid presentations of  different accelerating degrees for different $u_0$.
\begin{itemize}
  \item[(i)] If $u_0(x)=C e^{-p x/\ln x}$ with  $p,C>0$ for large $x$, then
      \[
       \min E_\omega(t)\sim \max E_\omega(t)\sim  a p^{-1} t\ln t~\text{as}~t\rightarrow+\infty.
      \]
  \item[(ii)] If $u_0(x)=C e^{-px^{q}}$ with   $q\in(0,1)$ and $p,C>0$ for large $x$, then
      \[
      \min E_\omega(t)\sim \max E_\omega(t)\sim \left(a/p\right)^{1/q}t^{1/q}~\text{as}~t\rightarrow+\infty.
      \]
  \item[(iii)] If $u_0(x)=C x^{-p}$ with  $p,C>0$ for large $x$, then
      \[
      \ln(\min E_\omega(t))\sim \ln(\max E_\omega(t))\sim ap^{-1}t~\text{as}~t\rightarrow+\infty.
      \]
  \item[(iv)] If $u_0(x)=C(\ln x)^{-p}$ with  $p,C>0$ for large $x$, then
      \[
      \ln\left(\ln(\min E_\omega(t))\right)\sim \ln\left(\ln(\max E_\omega(t))\right)\sim ap^{-1}t~\text{as}~t\rightarrow+\infty.
      \]
\end{itemize}
}
\end{remark}

The rest of this paper is organized as follows. In Section 2, we present some preliminary results including the global existence  of the classical solution of \eqref{1.1}, some properties of initial function, and the  comparison principle of the auxiliary equations \eqref{1.94} and \eqref{1.93} .
In Section 3, we prove Theorem \ref{th1} by estimating the upper bound and the lower bound of the moving level set.

\section{preparatory results}

In this section, we present some preliminary results.
We first state the results about  the global existence and asymptotic behavior of classical solutions of \eqref{1.1}.
Denote
\[
C_\text{unif}^b(\mathbb R)=\{u\in C(\mathbb R)~|~u~\text{is uniformly continuous in $x\in\mathbb R$ and $\sup_{x\in\mathbb R}|u(x)|<\infty$}\},
\]
which is equipped with the norm
\[
\|u\|_{\infty}=\sup_{x\in\mathbb R}|u(x)|.
\]
For  $0<\nu<1$, denote
\[
C_\text{unif}^{\nu}(\mathbb R)=\bigg\{u\in C_\text{unif}^b(\mathbb R)| \sup\limits_{x,y\in\mathbb R,x\neq y}\frac{|u(x)-u(y)|}{|x-y|^{\nu}}<\infty\bigg\}
\]
equipped with the   norm
\[
\|u\|_{\infty,\nu}=\sup_{x\in\mathbb R}|u(x)|+\sup\limits_{x,y\in\mathbb R,x\neq y}\frac{|u(x)-u(y)|}{|x-y|^{\nu}}.
\]
For   $0<\theta<1$, denote
\[
\begin{aligned}
&C^\theta((t_1,t_2),C_\text{unif}^{\nu}(\mathbb R))\\
&=\{u\in C((t_1,t_2),C_\text{unif}^{\nu}(\mathbb R))|~u(t)~\text{is locally H\"{o}lder continuous with exponent $\theta$}\}.
\end{aligned}
\]
\begin{lemma}[Salako and Shen \cite{SS2017}]\label{lem9.2}
For any nonnegative initial function $u_0\in C_{\rm unif}^b(\mathbb R)$,  there exists  $T_{\rm max}\in (0,+\infty]$ such that  \eqref{1.1} has a unique nonnegative classical solution $(u,v)$ satisfying
\begin{equation}\label{1.11}
u\in C([0,T_{\rm\max}), C_{\rm unif}^b(\mathbb R))\cap C^1((0,T_{\rm\max}), C_{\rm unif}^b(\mathbb R)),
\end{equation}
such that $u(\cdot,t) \rightarrow u_0$  in $C_{\rm unif}^b(\mathbb R)$ as $t\rightarrow 0^+$ and
\[
u,~u_x,~u_{xx},~u_t\in C^\theta((0,T_{\rm\max}),C_{\rm unif}^{\nu}(\mathbb R))
\]
for $0<\theta\ll 1$ and $0<\nu\ll 1$.
If $b>\chi\mu$, then the classical solution is global {\rm(}namely, $T_\text{max}=+\infty${\rm)} and uniformly bounded in time. Moreover, if $b>2\chi\mu$ and $\inf_{x\in\mathbb R}u_0(x)>0$, then
\begin{equation}\label{1.12}
\left\|u(\cdot,t)-\frac{a}{b}\right\|_{\infty}+\left\|v(\cdot,t)-\frac{a\mu}{b\lambda}\right\|_{\infty}\rightarrow 0~\text{as}~t\rightarrow +\infty.
\end{equation}
\end{lemma}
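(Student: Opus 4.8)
\medskip
\noindent\textbf{Proof proposal.} Since $\tau=0$ the second equation is elliptic, so $v$ can be eliminated: its unique bounded solution is $v=\mu G_\lambda\ast u$ with $G_\lambda(x)=\tfrac1{2\sqrt\lambda}e^{-\sqrt\lambda|x|}$, and by Young's inequality $\|v\|_\infty\le\tfrac\mu\lambda\|u\|_\infty$, $\|v_x\|_\infty\le\tfrac\mu{\sqrt\lambda}\|u\|_\infty$, $\|v_{xx}\|_\infty=\|\lambda v-\mu u\|_\infty\le2\mu\|u\|_\infty$. Substituting, \eqref{1.1} becomes the scalar nonlocal semilinear equation $u_t=u_{xx}-\chi\big(u\,\partial_x(\mu G_\lambda\ast u)\big)_x+u(a-bu)$ for $u$ alone. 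For local well-posedness in $C_{\rm unif}^b(\mathbb R)$ I would run a contraction mapping for the Duhamel formula $u(t)=e^{t\Delta}u_0-\chi\int_0^t\partial_x e^{(t-s)\Delta}\big(u\,v_x[u]\big)(s)\,ds+\int_0^t e^{(t-s)\Delta}\big(u(a-bu)\big)(s)\,ds$ in $C([0,T];C_{\rm unif}^b(\mathbb R))$, using the smoothing $\|\partial_x e^{\sigma\Delta}\|_{C^0\to C^0}\lesssim\sigma^{-1/2}$ (integrable in $s$) to absorb the advection term; this produces a unique mild solution on a maximal interval $[0,T_{\rm max})$ which is classical and satisfies the parabolic regularity \eqref{1.11} by Schauder bootstrap, with $u(\cdot,t)\to u_0$ in $C_{\rm unif}^b$ as $t\to0^+$ because $e^{t\Delta}$ is a $C_0$-semigroup there. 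Nonnegativity is preserved: writing the equation as $u_t=u_{xx}-\chi v_xu_x+c(x,t)u$ with $c=a-bu-\chi\lambda v+\chi\mu u$ bounded, $u\equiv0$ is a solution and the maximum principle applies.

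\medskip
\noindent For the uniform bound when $b>\chi\mu$, expand $-\chi(uv_x)_x=-\chi u_xv_x-\chi u(\lambda v-\mu u)$, so that the equation reads $u_t=u_{xx}-\chi u_xv_x-\chi\lambda v\,u+u\big(a-(b-\chi\mu)u\big)$, and estimate $M(t):=\sup_{x}u(x,t)$ by the maximum principle: at a point where $u$ is (nearly) maximal, $u_{xx}\le0$ and $u_x=0$, and discarding $-\chi\lambda v\,u\le0$ gives $\dot M\le aM-(b-\chi\mu)M^2$ in the sense of upper Dini derivatives; since $b-\chi\mu>0$, $M(t)\le\max\{\|u_0\|_\infty,\,a/(b-\chi\mu)\}$ for $t<T_{\rm max}$, which together with parabolic smoothing excludes blow-up, so $T_{\rm max}=+\infty$ and $u$ is globally bounded.

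\medskip
\noindent For the stabilization when $b>2\chi\mu$ and $\inf u_0>0$, track $m(t):=\inf_xu(x,t)$ as well; as $u$ solves a linear parabolic equation with bounded coefficients and $\inf u_0>0$, the Harnack inequality keeps $m(t)>0$. At a near-minimum $\dot m\ge am-(b-\chi\mu)m^2-\chi\lambda m\,v$ and at a near-maximum $\dot M\le aM-(b-\chi\mu)M^2-\chi\lambda M\,v$, and at such points $\tfrac\mu\lambda m\le v\le\tfrac\mu\lambda M$ (from $v=\mu G_\lambda\ast u$ and $m\le u\le M$); hence $\dot m\ge m\big(a-(b-\chi\mu)m-\chi\mu M\big)$ and $\dot M\le M\big(a-(b-\chi\mu)M-\chi\mu m\big)$, so by ODE comparison (rendered monotone by $m\mapsto-m$) $Q(t)\le m(t)\le M(t)\le P(t)$, where $(P,Q)$ solves $\dot P=P(a-(b-\chi\mu)P-\chi\mu Q)$, $\dot Q=Q(a-(b-\chi\mu)Q-\chi\mu P)$ with $P(0)=\|u_0\|_\infty\ge Q(0)=\inf u_0>0$. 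This planar system is competitive and dissipative, so by the Poincar\'e--Bendixson theory every trajectory converges to an equilibrium; its unique interior equilibrium $(a/b,a/b)$ has linearisation with eigenvalues $-a$ and $\tfrac ab(2\chi\mu-b)$, both negative \emph{exactly} when $b>2\chi\mu$, while $(0,0)$ is a source and $(a/(b-\chi\mu),0)$ a saddle whose stable manifold is the axis $\{Q=0\}$; since the trajectory stays in the invariant region $\{P\ge Q>0\}$, it converges to $(a/b,a/b)$. Therefore $m(t),M(t)\to a/b$, i.e. $\|u(\cdot,t)-a/b\|_\infty\to0$, and then $\|v(\cdot,t)-\tfrac{a\mu}{b\lambda}\|_\infty=\mu\|G_\lambda\ast(u(\cdot,t)-\tfrac ab)\|_\infty\le\tfrac\mu\lambda\|u(\cdot,t)-\tfrac ab\|_\infty\to0$.

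\medskip
\noindent The point I expect to be most delicate is making these comparisons rigorous on the \emph{unbounded} line, where $u(\cdot,t)$ need not attain its sup or inf: one works with regularised extrema (or a maximum principle adapted to $C_{\rm unif}^b$-solutions) and must justify both the PDE-to-ODE differential inequalities and the mixed-monotone ODE comparison, and then establish the global attractivity of $(a/b,a/b)$ (ruling out other limit sets and periodic orbits) for the reduced planar system — the step where the sign condition $b>2\chi\mu$ is genuinely used.
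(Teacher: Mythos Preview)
The paper does not actually prove this lemma: it is quoted verbatim from Salako and Shen \cite{SS2017} and used as a black box, so there is no ``paper's own proof'' to compare against. Your sketch is correct and is essentially the argument one finds in \cite{SS2017}: reduce to a scalar nonlocal equation via $v=\mu G_\lambda\ast u$, get local existence by Duhamel/contraction in $C_{\rm unif}^b$, use the form $u_t=u_{xx}-\chi v_xu_x-\chi\lambda v\,u+u(a-(b-\chi\mu)u)$ to run a supremum comparison for the global bound when $b>\chi\mu$, and for the stabilization couple the sup and inf of $u$ to the symmetric competitive Lotka--Volterra pair $(\dot P,\dot Q)$, whose interior equilibrium $(a/b,a/b)$ is globally attracting in $\{P\ge Q>0\}$ precisely when $b>2\chi\mu$.

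Two small remarks. First, your identification of the delicate point is exactly right: on $\mathbb R$ the extrema need not be attained, and one must either work with $M_\varepsilon(t)=\sup_x u(x,t)-\varepsilon\varphi(x)$ for a coercive weight $\varphi$, or (as in \cite{SS2017}) argue via sequences of near-extremal points and pass to the limit; the same care is needed to justify the mixed quasi-monotone comparison $Q\le m\le M\le P$. Second, to exclude periodic orbits and boundary $\omega$-limits for the planar system you can bypass Poincar\'e--Bendixson entirely: by the $P\leftrightarrow Q$ symmetry the diagonal $\{P=Q\}$ is invariant, the region $\{P\ge Q>0\}$ is forward-invariant, and the reflected system is cooperative, so every bounded trajectory is eventually monotone and converges to an equilibrium; the only one reachable from the open region is $(a/b,a/b)$.
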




Now we show that when $b>2\chi\mu$, \eqref{1.1} has  no other equilibrium
$(\phi(x), \psi(x))$ satisfying  $\inf_{x\in\mathbb R}{\phi(x)}>0$ except $(\phi,\psi)\equiv (\frac{a}{b},\frac{a\mu}{b\lambda})$.
\begin{lemma} \label{lem9.1} Consider the following  system
\begin{equation}\label{2.11}
\left\{
\begin{aligned}
&\phi''-\chi (\phi \psi')'+\phi(a-b\phi)=0,&&x\in\mathbb R,\\
&\psi''-\lambda \psi+\mu \phi=0,&&x\in\mathbb R.
\end{aligned}
\right.
\end{equation}
If $b>2\chi\mu$,  $(\phi,\psi)\equiv (\frac{a}{b},\frac{a\mu}{b\lambda})$ is the unique solution of  \eqref{2.11} in $C_{\rm unif}^b(\mathbb R)$ such that $\inf_{x\in\mathbb R}{\phi(x)}>0$.
\end{lemma}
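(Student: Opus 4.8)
The plan is to show that any $(\phi,\psi)\in C_{\rm unif}^b(\mathbb R)\times C_{\rm unif}^b(\mathbb R)$ solving \eqref{2.11} with $m:=\inf_{\mathbb R}\phi>0$ and $M:=\sup_{\mathbb R}\phi<\infty$ must satisfy $M=m$, and then to identify the common value. First I would record the regularity and a priori bounds. Since $\phi$ is bounded and continuous, the second equation of \eqref{2.11} has the unique bounded solution
\[
\psi(x)=\frac{\mu}{2\sqrt\lambda}\int_{\mathbb R}e^{-\sqrt\lambda|x-y|}\phi(y)\,dy ,
\]
because the homogeneous equation $\psi''-\lambda\psi=0$ has only the unbounded solutions $e^{\pm\sqrt\lambda x}$. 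As $\int_{\mathbb R}e^{-\sqrt\lambda|x-y|}\,dy=2/\sqrt\lambda$, this representation gives at once
\[
\frac{\mu m}{\lambda}\le\psi(x)\le\frac{\mu M}{\lambda},\qquad |\psi'(x)|\le\frac{\mu M}{\sqrt\lambda},\qquad x\in\mathbb R ,
\]
and, together with the first equation, standard $W^{2,p}_{\rm loc}$ and Schauder estimates upgrade $(\phi,\psi)$ to a classical solution whose derivatives up to second order are bounded on $\mathbb R$ (with bounds that are translation invariant).

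Next I would eliminate the cross term: substituting $\psi''=\lambda\psi-\mu\phi$ into $(\phi\psi')'=\phi'\psi'+\phi\psi''$, the first equation of \eqref{2.11} becomes
\[
\phi''-\chi\psi'\,\phi'+\phi\big[a-\chi\lambda\psi-(b-\chi\mu)\phi\big]=0,\qquad x\in\mathbb R ,
\]
which is meaningful since $b>2\chi\mu$ forces $b-\chi\mu>0$. Now I would exploit the supremum and infimum of $\phi$ by a translation-compactness argument. Choose $x_n$ with $\phi(x_n)\to M$. By the uniform second-order bounds, the translates $(\phi(\cdot+x_n),\psi(\cdot+x_n))$ are precompact in $C^2_{\rm loc}(\mathbb R)$, so along a subsequence they converge to a classical solution $(\tilde\phi,\tilde\psi)$ of the same system satisfying $\tilde\phi\le M$ on $\mathbb R$, $\tilde\phi(0)=M$, and $\tilde\psi(0)\ge\mu m/\lambda$ (these one-sided bounds pass to the limit because they hold at every point beforehand). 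Since $0$ is an interior global maximum of $\tilde\phi$, we have $\tilde\phi'(0)=0$ and $\tilde\phi''(0)\le0$; evaluating the rewritten equation at $x=0$ and using $\tilde\phi(0)=M\ge m>0$ and $\tilde\psi(0)\ge\mu m/\lambda$ yields
\[
(b-\chi\mu)M+\chi\mu m\le a .
\]
Repeating the argument with a sequence where $\phi\to m$ produces a limit solution with an interior global minimum at $0$, whence $\tilde\phi''(0)\ge0$ and, using $\tilde\psi(0)\le\mu M/\lambda$,
\[
(b-\chi\mu)m+\chi\mu M\ge a .
\]
Subtracting the two inequalities gives $(b-2\chi\mu)(M-m)\le0$, and since $b-2\chi\mu>0$ and $M\ge m$ we conclude $M=m$. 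Thus $\phi\equiv\phi_0$ for some constant $\phi_0>0$; the second equation then forces $\psi\equiv\mu\phi_0/\lambda$, and the first forces $\phi_0(a-b\phi_0)=0$, i.e.\ $\phi_0=a/b$, so $(\phi,\psi)\equiv(\tfrac ab,\tfrac{a\mu}{b\lambda})$, as claimed.

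The routine ingredients are the convolution representation of $\psi$ and the elementary algebra at the end; the step requiring the most care is the translation-compactness argument — one must verify that the translated solutions have uniformly bounded $C^{2,\alpha}_{\rm loc}$ norms (so that a subsequence converges in $C^2_{\rm loc}$), that the limit still solves \eqref{2.11}, and that the pointwise bounds $\tilde\phi\le M$, $\tilde\psi(0)\ge\mu m/\lambda$ (and their minimum counterparts) survive the limit. If one prefers to avoid this, and if $\phi$ happens to attain its extrema, one may instead apply the classical maximum principle directly at those points; but on $\mathbb R$ the extrema need not be attained, so the limiting procedure is the robust route.
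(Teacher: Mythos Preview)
Your argument is correct, but it follows a genuinely different route from the paper. The paper's proof is a two-line application of the parabolic stabilization result quoted as Lemma~\ref{lem9.2} (from \cite{SS2017}): it simply observes that a stationary solution $(\phi,\psi)$ with $\inf\phi>0$ is a time-independent solution of \eqref{1.1} with initial datum $\phi$, and then \eqref{1.12} forces $\|\phi-a/b\|_\infty+\|\psi-a\mu/(b\lambda)\|_\infty=0$. Your approach is instead purely elliptic: you rewrite the first equation using $\psi''=\lambda\psi-\mu\phi$, run a translation-compactness argument to produce limiting solutions attaining the sup and inf of $\phi$ at interior points, read off the two inequalities $(b-\chi\mu)M+\chi\mu m\le a\le (b-\chi\mu)m+\chi\mu M$ from the sign of $\tilde\phi''(0)$ together with the two-sided bound $\mu m/\lambda\le\tilde\psi\le\mu M/\lambda$, and subtract. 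The paper's route is shorter given the black box, but it imports the entire parabolic well-posedness and long-time convergence theory; your route is self-contained, uses only the elliptic structure and standard Schauder/compactness, and in fact makes transparent exactly where the threshold $b>2\chi\mu$ enters (as the coefficient in $(b-2\chi\mu)(M-m)\le 0$). Both are valid; yours would be preferable in a setting where the stabilization lemma is not already available.
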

\begin{proof}
We suppose that  $(\phi,\psi)$ is a  solution of the system  \eqref{2.11} in $C_{\rm unif}^b(\mathbb R)$ with $\inf_{x\in\mathbb R}{\phi(x)}>0$.
Then   $(u(t,x),v(t,x))=(\phi(x),\psi(x))$ is the unique solution of \eqref{1.1} with $u_0(x)=\phi(x)$.  By Lemma \ref{lem9.2} and $\inf_{x\in\mathbb R}{\phi(x)}>0$, we have that
\[
\left\|\phi(\cdot)-\frac{a}{b}\right\|_{\infty}+\left\|\psi(\cdot)-\frac{a\mu}{b\lambda}\right\|_{\infty}=0
\]
which implies that $(\phi,\psi)\equiv(\frac{a}{b},\frac{a\mu}{b\lambda})$.
\end{proof}

The following lemma states some properties of $u_0$ when it satisfies  \eqref{1.2} and \eqref{1.10}, which play crucial roles in the study of acceleration of propagation in the paper.
\begin{lemma}\label{lem9.0}
Assume that $u_0$ satisfies \eqref{1.2} and \eqref{1.10}. We have the following    statements:
\begin{itemize}
  \item[(i)] $u_0'(x)/u_0(x)\rightarrow 0$ as $x\rightarrow+\infty${\rm;}
  \item[(ii)] $u_0$ decays more slowly than any exponentially decaying function as $x\rightarrow+\infty$, namely,
      \[
      \forall~\kappa,~\exists~x_\kappa~\text{s.t.}~u_0(x)\geqslant e^{-\kappa x}~~\text{for all}~~x\in[x_\kappa,+\infty);
      \]
  \item[(iii)] for any $\gamma_1>0$, $\gamma_2>0$, and $\rho_1>\rho_2>0$, if the functions $y_1(\cdot)$ and $y_2(\cdot)$ satisfy
      \[
      u_0(y_1(t))=\gamma_1 e^{-\rho_1 t}\quad \text{and}\quad u_0(y_2(t))=\gamma_2 e^{-\rho_2 t}\quad \text{for}~t>0~\text{large enough},
      \]
      then we have that
      \[
      \lim_{t\rightarrow+\infty}(y_1(t)-y_2(t))=+\infty.
      \]
\end{itemize}
\end{lemma}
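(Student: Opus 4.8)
The plan is to prove the three claims of Lemma~\ref{lem9.0} in order, using only the hypotheses \eqref{1.2} and \eqref{1.10} on $u_0$ together with elementary calculus.

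\textbf{Step 1: Proof of (i).} Here I want to deduce $u_0'(x)/u_0(x)\to 0$ from the hypothesis $u_0''(x)/u_0(x)\to 0$ (and $u_0'\le 0$, $u_0>0$ on $[\xi_0,+\infty)$). Set $g(x)=-u_0'(x)/u_0(x)\ge 0$. Then $g'=-u_0''/u_0+(u_0'/u_0)^2=-u_0''/u_0+g^2$. The idea is a contradiction/continuity argument: fix a small $\delta>0$; since $u_0''/u_0\to 0$ there is $X_\delta$ with $|u_0''/u_0|\le\delta^2$ on $[X_\delta,\infty)$, so $g'\ge g^2-\delta^2$ there. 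If $g(x_1)>\delta$ for some $x_1\ge X_\delta$, then $g'>0$ at that point and in fact $g$ stays above $\delta$ and satisfies $g'\ge g^2-\delta^2\ge(1-(\delta/g)^2)g^2$; comparing with the ODE $h'=\tfrac12 h^2$ (once $g\ge\sqrt2\,\delta$, say) shows $g$ blows up in finite time, contradicting that $g$ is defined (finite) on all of $[X_\delta,\infty)$ because $u_0\in C^2$ and $u_0>0$ there. Hence $\limsup_{x\to\infty}g(x)\le\delta$ once we handle the intermediate band $\delta<g\le\sqrt2\delta$; a cleaner route is: if $g(x_0)\ge\sqrt2\,\delta$ for some large $x_0$, then $g'\ge g^2/2$ for $x\ge x_0$, giving $g(x)\ge\frac{1}{g(x_0)^{-1}-(x-x_0)/2}\to\infty$ as $x\uparrow x_0+2/g(x_0)$, impossible. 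So $g(x)<\sqrt2\,\delta$ for all large $x$, and since $\delta$ is arbitrary, $g(x)\to 0$, which is (i).

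\textbf{Step 2: Proof of (ii).} This follows quickly from (i). Fix $\kappa>0$. By (i) there is $x_\kappa'$ with $-u_0'(x)/u_0(x)=(\ln u_0)'(x)\ge-\kappa$ for all $x\ge x_\kappa'$ (indeed $|u_0'/u_0|\le\kappa$ there, but we only need the lower bound $(\ln u_0)'\ge-\kappa$, which holds automatically once $|u_0'/u_0|\le\kappa$). Integrating from $x_\kappa'$ to $x$ gives $\ln u_0(x)\ge\ln u_0(x_\kappa')-\kappa(x-x_\kappa')$, i.e. $u_0(x)\ge u_0(x_\kappa')e^{\kappa x_\kappa'}\,e^{-\kappa x}$. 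Since $u_0(x_\kappa')e^{\kappa x_\kappa'}$ is a fixed positive constant, by enlarging $x_\kappa'$ to some $x_\kappa$ (so that $e^{-\kappa x}$ has absorbed the constant, using that we may as well replace $\kappa$ by $\kappa/2$ in the above and then choose $x_\kappa$ large) we get $u_0(x)\ge e^{-\kappa x}$ for $x\ge x_\kappa$, which is (ii).

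\textbf{Step 3: Proof of (iii).} Given $\gamma_1,\gamma_2>0$ and $\rho_1>\rho_2>0$, with $u_0(y_1(t))=\gamma_1e^{-\rho_1 t}$ and $u_0(y_2(t))=\gamma_2e^{-\rho_2 t}$ for large $t$. First note $y_1(t),y_2(t)\to+\infty$ since $u_0\to 0$ only at $+\infty$ and $u_0$ is eventually nonincreasing (so the preimages are well defined for large $t$ and tend to $\infty$). Pick $\kappa$ with $0<\kappa<\rho_1-\rho_2$; wait — I want to compare $y_1-y_2$, so instead: for large $t$, on the interval $[y_2(t),y_1(t)]$ (note $y_1(t)\ge y_2(t)$ eventually because $u_0(y_1(t))=\gamma_1e^{-\rho_1t}<\gamma_2e^{-\rho_2t}=u_0(y_2(t))$ for large $t$ and $u_0$ is nonincreasing), we use part (i): given $\kappa>0$, for large $x$ we have $|u_0'(x)/u_0(x)|\le\kappa$, hence $|(\ln u_0)'|\le\kappa$ on $[y_2(t),y_1(t)]$ for $t$ large. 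Integrating, $|\ln u_0(y_1(t))-\ln u_0(y_2(t))|\le\kappa\,(y_1(t)-y_2(t))$. But the left side equals $|\ln(\gamma_1/\gamma_2)-(\rho_1-\rho_2)t|$, which for large $t$ is $\ge\tfrac12(\rho_1-\rho_2)t$. Therefore $y_1(t)-y_2(t)\ge\frac{(\rho_1-\rho_2)t}{2\kappa}\to+\infty$, proving (iii).

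\textbf{Main obstacle.} The only genuinely delicate point is Step~1, converting the second-derivative decay hypothesis into first-derivative decay; the Riccati-type blow-up argument for $g=-u_0'/u_0$ must be set up carefully (choosing the threshold $\sqrt2\,\delta$ so that $g'\ge g^2/2$ forces finite-time blow-up, which contradicts $g$ being finite on all of $[\xi_0,\infty)$). Parts (ii) and (iii) are then short consequences of (i) via logarithmic integration, so I expect no difficulty there.
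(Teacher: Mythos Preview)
Your proof is correct, and all three parts are sound. Your arguments for (i) and (iii) differ from the paper's in ways worth noting.

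For (i), the paper first proves that $\lim_{x\to\infty} u_0'(x)/u_0(x)$ exists---via a somewhat delicate contradiction locating oscillation points with the intermediate value theorem---and then shows the limit is zero by observing that otherwise $g'/g^2\to -1$, whose integral diverges. Your Riccati blow-up argument bypasses the existence step entirely: once $|u_0''/u_0|\le\delta^2$ on $[X_\delta,\infty)$, any point where $g=-u_0'/u_0\ge\sqrt{2}\,\delta$ forces $g'\ge g^2/2$ thereafter and hence finite-time blow-up, contradicting $g\in C^1([\xi_0,\infty))$. This is shorter and more direct; the paper's two-step route, on the other hand, yields the existence of the limit as a by-product, which is not needed here but could be of independent interest.

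For (iii), the paper works additively: it bounds $u_0(y_1(t))-u_0(y_2(t))=\int_{y_2}^{y_1}u_0'\ge -\tfrac{1}{2M}(y_1-y_2)\,u_0(y_2(t))$ and rearranges to get $y_1-y_2\ge M$ for each fixed $M$. Your logarithmic version---$|\ln u_0(y_1)-\ln u_0(y_2)|\le\kappa(y_1-y_2)$ with the left side growing like $(\rho_1-\rho_2)t$---is slicker and in fact gives the quantitative lower bound $y_1(t)-y_2(t)\gtrsim (\rho_1-\rho_2)t/(2\kappa)$, which is stronger than mere divergence.

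Part (ii) is essentially the same in both: integrate $(\ln u_0)'\ge -\eta$ with $\eta<\kappa$ and absorb the constant by enlarging $x$. Your ``replace $\kappa$ by $\kappa/2$'' remark is exactly the paper's choice of $\eta\in(0,\kappa)$.
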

\begin{proof}
{(i)} Let $g(x)=u_0'(x)/u_0(x)\in C^1([\xi_0,+\infty))$.
Then we obtain $g'(x)+g^2(x)=u_0''(x)/u_0(x)$, which implies
\begin{equation}\label{9.1}
\lim\limits_{x\rightarrow+\infty} (g'(x)+g^2(x))=0.
\end{equation}
Note that $g(x)\leqslant 0$ for all $x\in[\xi_0,+\infty)$. In what follows, we prove that $\lim\limits_{x\rightarrow+\infty}g(x)$ exists in $(-\infty,0]\cup\{-\infty\}$.
Otherwise, there are two sequences $\{y_n\}_{n\in\mathbb N}$ and $\{z_n\}_{n\in\mathbb N}$  satisfying   that $y_n\rightarrow +\infty$, $z_n\rightarrow +\infty$ as $n\rightarrow\infty$, and
\[
\lim_{n\rightarrow\infty}g(y_n)=A,~\lim_{n\rightarrow\infty}g(z_n)=B~\text{with}~A,B\in (-\infty,0]\cup\{-\infty\}~\text{and}~A<B.
\]
Let $\varepsilon$ be a negative constant in $(A,B)$.
Since there exists $N_0>0$ such that $g(y_n)<\varepsilon$ and $g(z_n)>\varepsilon$ for any $n\geqslant N_0$, by the continuity of $g$ and
intermediate value theorem,
there exits $x_n\in (\min\{y_n,z_n\},\max\{y_n,z_n\})$ with $n\geqslant N_0$ such that $x_n\rightarrow+\infty$ as $n\rightarrow +\infty$, and
\begin{equation}\label{9.2}
g(x_n)=\varepsilon~\text{for any}~n\geqslant N_0.
\end{equation}
Then by \eqref{9.1}, we have that $\lim\limits_{n\rightarrow+\infty} g'(x_n)=-\varepsilon^2<0$, which implies    there is $N_1\geqslant N_0$ such that
\begin{equation}\label{9.3}
g'(x_n)<-\varepsilon^2/2<0~\text{for any}~n\geqslant N_1.
\end{equation}
Combining \eqref{9.2} with \eqref{9.3}, by the continuity of $g$, we can find a sequence   $\{\alpha_n\}$ with $n\geqslant N_1$ satisfying that
\[
x_n\leqslant \alpha_n\leqslant x_{n+1},~g(\alpha_n)<\varepsilon<0,~ g'(\alpha_n)>0~~\text{for any}~n\geqslant N_1.
\]
Then we have that   $g'(\alpha_n)+g^2(\alpha_n)>\varepsilon^2$ for any $n\geqslant N_1$, which is a contradiction with \eqref{9.1}. Therefore,   $\lim\limits_{x\rightarrow+\infty}g(x)$ exists in $(-\infty,0]\cup\{-\infty\}$.

To complete the proof of (i), we suppose by contradiction that $\lim\limits_{x\rightarrow+\infty}g(x)\neq0$.  Then by  \eqref{9.1} we have that
\[
\lim\limits_{x\rightarrow+\infty}(g'(x)+g^2(x))/ g^2(x)=0.
\]
Namely,  $\lim\limits_{x\rightarrow+\infty}g'(x)/ g^2(x)=-1$. For any fixed constant $y\in[\xi_0,+\infty)$, it holds that
\[
-\infty=\int_y^{+\infty} \frac{g'(x)}{g^2(x)} dx=\frac{1}{g(y)}-\frac{1}{g(+\infty)}
\]
This is a contradiction since we suppose   $g(+\infty)\neq0$. Therefore, we have that $\lim\limits_{x\rightarrow+\infty}g(x)=0$.

{(ii)} By Lemma \ref{lem9.0} (i),  we have that $u_0'(x)/u_0(x)\rightarrow 0$  as $x\rightarrow+\infty$.
For any  $\kappa>0$, we can  choose a constant $\eta$   in $(0,\kappa)$. Then there exists $x_\eta>0$ such that $u_0'(x)/u_0(x)\geqslant-\eta$ for $x\in[x_\eta,+\infty)$. By integrating this inequality from $x_\eta$ to $x$, we can get  that $u_0(x)/u_0(x_\eta)\geqslant e^{-\eta(x-x_\eta)}$ for any $x\in[x_\eta,+\infty)$.
Hence, there exists $C_\eta=u_0(x_\eta)e^{\eta x_\eta}>0$ such that
\[
u_0(x)\geqslant C_\eta e^{-\eta x}~\text{for all}~ x\in[x_\eta,+\infty).
\]
For any $\kappa$, we choose $x_\kappa=\max\{x_\eta, (\eta-\kappa)^{-1}\ln {C_\eta}\}$. It follows that
\[
u_0(x)\geqslant C_\eta e^{(\kappa-\eta) x}e^{-\kappa x}\geqslant e^{-\kappa x}~~\text{for all}~~x\in[x_\kappa,+\infty).
\]

{(iii)}
It suffices to prove that for any $M>0$, there exists $t_M>0$ such that
\[
y_1(t)-y_2(t)\geqslant M~\text{for all}~t\geqslant t_M.
\]
By \eqref{1.2}, \eqref{1.10}, and $\rho_1>\rho_2>0$, we can find a large constant $t_0\geqslant 0$ such that  the function $t\mapsto y_i(t)$ with $i\in\{1,2\}$ is  nondecreasing  on $[t_0,+\infty)$ and
\[
y_1(t)> y_2(t)~\text{for}~t\geqslant t_0,~y_2(t)\rightarrow +\infty~\text{as}~t\rightarrow +\infty.
\]
By $\rho_1>\rho_2>0$ and  Lemma \ref{lem9.0} (i), for any $M>0$,  there is a large constant $t_M\geqslant t_0$ such that
\begin{equation}\label{1.35}
\frac{\gamma_1}{\gamma_2} e^{(\rho_2-\rho_1)t}\leqslant\frac{1}{2}~\text{for all}~t\geqslant t_M,
\end{equation}
and $y_2(t_M)$   is large enough  such that
\[
y_2(t_M)\geqslant \xi_0,~\text{and}~u'_0(x)\geqslant-\frac{1}{2M} u_0(x)~\text{for all}~x\geqslant y_2( t_M).
\]
When $t\geqslant t_M$, it follow from $y_2(t)\geqslant y_2(t_M)$  that
\[
u'_0(x)\geqslant-\frac{1}{2M} u_0(x)~\text{for all}~x\geqslant y_2(t).
\]
Let $t_M$ be larger (if necessary) such  that
\[
u_0(y_1(t))=\gamma_1 e^{-\rho_1 t}~ \text{and}~ u_0(y_2(t))=\gamma_2 e^{-\rho_2 t}~\text{for all}~t\geqslant t_M.
\]
Then some calculations show that
\[
\gamma_1 e^{-\rho_1 t}-\gamma_2 e^{-\rho_2 t}=u_0(y_1(t))-u_0(y_2(t))=\int_{y_2(t)}^{y_1(t)}u_0'(x)dx\geqslant -\frac{1}{2M} \int_{y_2(t)}^{y_1(t)}u_0(x)dx,\quad t\geqslant t_M.
\]
Since $u_0$ is nonincreasing on $[\xi_0,+\infty)$ and $y_2(t)\geqslant y_2(t_M)\geqslant \xi_0$, we can get   $u_0(x)\leqslant u_0(y_2(t))$ for $x\geqslant y_2(t)$, which implies that
\[
\gamma_1 e^{-\rho_1 t}-\gamma_2 e^{-\rho_2 t}\geqslant -\frac{1}{2M} (y_1(t)-y_2(t))u_0(y_2(t)) =-\frac{1}{2M} (y_1(t)-y_2(t))\gamma_2 e^{-\rho_2 t},\quad t\geqslant t_M.
\]
By \eqref{1.35}, we have  that
\[
y_1(t)-y_2(t)\geqslant -2M \left(\frac{\gamma_1}{\gamma_2} e^{(\rho_2-\rho_1) t}-1\right)\geqslant M,\quad t\geqslant t_M.
\]
This completes the proof.
\end{proof}

\begin{remark}\em{
We remark that the proof of Lemma \ref{lem9.0} (i) was given in \cite{HamelRogues2010} but hard to understand. Here we provide a different and easier proof. Lemma \ref{lem9.0} (ii) was announced in \cite{HamelRogues2010} without proof. Hence here we supplement a proof.
}
\end{remark}

Let $L$ be a positive constant and  assume that $g: [0,L]\rightarrow \mathbb R$ is a Lipschitz continuous function.
For $\alpha\in\mathbb R$, we consider the equation
\begin{equation}\label{2.2}
\left\{
\begin{aligned}
&w_t=w_{xx}+ \alpha |w_x|+ g(w),~&&x\in\mathbb R,~t>0,\\
&w(x,0)=w_0(x),~&&x\in\mathbb R.
\end{aligned}
\right.
\end{equation}
The following lemma provides a comparison principle for \eqref{2.2}, which is a crucial tool used in the sequel.
\begin{lemma}[Comparison principle]\label{le2.1}
Suppose that the bounded nonnegative  functions $\bar w$ and  $\underline w$  are the upper and lower solutions of \eqref{2.2}, in the sense that
\[
\bar w_t-\bar w_{xx}- \alpha|\bar w_x|- g(\bar w)\geqslant0 \geqslant\underline  w_t-\underline  w_{xx}- \alpha|\underline  w_x|- g(\underline  w),\quad\text{for}~x\in\mathbb R,~t>0.
\]
If $\bar w(x,0)\geqslant \underline w(x,0)$ for all $x\in \mathbb R$, then $\bar w(x,t)\geqslant \underline w(x,t)$ for all   $(x,t)\in \mathbb R\times [0,+\infty)$.
\end{lemma}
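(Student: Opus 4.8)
The plan is to reduce Lemma~\ref{le2.1} to a maximum principle for a \emph{linear} uniformly parabolic equation with bounded coefficients on the unbounded slab $\R\times(0,T)$. Set $z:=\bar w-\underline w$. Subtracting the two differential inequalities in the definition of upper/lower solution gives, for $x\in\R$, $t>0$,
\[
z_t-z_{xx}\ \ge\ \alpha\big(|\bar w_x|-|\underline w_x|\big)+\big(g(\bar w)-g(\underline w)\big).
\]
Since $g$ is Lipschitz (with constant, say, $L_g$) and $\bar w,\underline w$ are bounded, we may write $g(\bar w)-g(\underline w)=b(x,t)\,z$ with $|b|\le L_g$. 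For the non-smooth term we use the reverse triangle inequality $\big|\,|\bar w_x|-|\underline w_x|\,\big|\le|\bar w_x-\underline w_x|=|z_x|$: setting $a(x,t):=(|\bar w_x|-|\underline w_x|)/z_x$ where $z_x\ne0$ and $a(x,t):=0$ where $z_x=0$, we obtain a bounded measurable $a$ with $|a|\le1$ such that $|\bar w_x|-|\underline w_x|=a(x,t)\,z_x$. Hence $z$ is a bounded classical supersolution of
\[
\mathcal L z:=z_t-z_{xx}-\alpha\,a(x,t)\,z_x-b(x,t)\,z\ \ge\ 0,\qquad z(\cdot,0)\ge0,
\]
with drift bounded by $|\alpha|$ and zeroth-order coefficient bounded by $L_g$.

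Next I would establish the Phragm\'en--Lindel\"of-type maximum principle: a bounded supersolution of $\mathcal L$ on $\R\times[0,T]$ with nonnegative trace at $t=0$ is nonnegative. Fix $T>0$ and $\epsilon>0$, and consider the barrier
\[
\Phi(x,t):=e^{\gamma t}\big(x^2+Nt+1\big),\qquad Z:=z+\epsilon\,\Phi .
\]
A direct computation gives $\mathcal L\Phi=e^{\gamma t}\big[(\gamma-b)(x^2+Nt+1)+N-2-2\alpha a\,x\big]$. Choosing $\gamma:=L_g+1$ (so $\gamma-b\ge1$) and $N:=1+|\alpha|^2$, and using $x^2-2|\alpha||x|\ge-|\alpha|^2$, one checks that $\mathcal L\Phi\ge e^{\gamma t}\,Nt\ge0$ for all $x\in\R$, $t\ge0$, whence $\mathcal LZ\ge0$. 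To cope with the sign-indefinite zeroth-order term, pass to $\widetilde Z:=e^{-\sigma t}Z$ with $\sigma:=L_g+1$; then $\widetilde Z_t-\widetilde Z_{xx}-\alpha a\,\widetilde Z_x-(b-\sigma)\widetilde Z=e^{-\sigma t}\mathcal LZ\ge0$ with $b-\sigma\le-1<0$. Since $z$ is bounded and $\Phi(x,t)\to+\infty$ as $|x|\to\infty$ uniformly on $[0,T]$, $\widetilde Z$ attains its minimum over $\R\times[0,T]$ at some $(x_0,t_0)$; as $\widetilde Z(\cdot,0)=z(\cdot,0)+\epsilon(x^2+1)>0$, necessarily $t_0>0$. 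If that minimum were negative, then at $(x_0,t_0)$ one has $\widetilde Z_x=0$, $\widetilde Z_{xx}\ge0$, $\widetilde Z_t\le0$, and $-(b-\sigma)\widetilde Z<0$, contradicting the supersolution inequality. Hence $\widetilde Z\ge0$, i.e. $z\ge-\epsilon\Phi$ on $\R\times[0,T]$; letting $\epsilon\to0$ yields $z\ge0$ there, and since $T$ is arbitrary this proves $\bar w\ge\underline w$ on $\R\times[0,\infty)$.

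I expect the only genuine subtlety to be the term $\alpha|w_x|$: because $s\mapsto\alpha|s|$ fails to be differentiable at $0$ one cannot linearize it exactly, but it \emph{is} globally Lipschitz in $w_x$, and this is precisely what allows the difference $|\bar w_x|-|\underline w_x|$ to be written as $a(x,t)\,z_x$ with $\|a\|_\infty\le1$; after this step everything reduces to the classical bounded-coefficient parabolic maximum principle on $\R$. A minor technical point worth stating explicitly is the regularity used: the sub/supersolutions are taken to be classical, so $z\in C^{2,1}(\R\times(0,\infty))\cap C(\R\times[0,\infty))$, while $a$ and $b$ need only be bounded, which is all the barrier argument requires.
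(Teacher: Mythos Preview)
Your argument is correct. The linearization of the nonsmooth term via $|\bar w_x|-|\underline w_x|=a(x,t)z_x$ with $|a|\le1$ is exactly the right move, and your barrier computation checks out: with $\gamma=L_g+1$ and $N=1+|\alpha|^2$ one indeed gets $\mathcal L\Phi\ge0$, and after the exponential shift to make the zeroth-order coefficient strictly negative the interior-minimum contradiction is clean. One cosmetic point: the sentence ``necessarily $t_0>0$'' should come \emph{after} the hypothesis ``if that minimum were negative'', since a nonnegative minimum at $t_0=0$ is perfectly consistent and already gives the conclusion.

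Your route, however, differs from the paper's. Both proofs begin identically by passing to $m=\bar w-\underline w$ and using the reverse triangle inequality to obtain $m_t\ge m_{xx}-|\alpha||m_x|-G|m|$. From there the paper does \emph{not} invoke a Phragm\'en--Lindel\"of barrier. Instead it argues directly by contradiction: assuming $\inf_x m(x,t_0)<0$ for some $t_0$, it looks at the maximum over $[0,t_0]$ of $h(t)=-e^{-Kt}\inf_x m(x,t)$, then splits into two cases according to whether the spatial infimum is attained at a finite point or only at $\pm\infty$. In the attained case the contradiction is immediate from first-derivative tests; in the unattained case the paper builds an ad hoc comparison function $\rho_\sigma(x,t)=-(\tfrac12+\sigma z(x))H_0e^{Kt}$ using a hand-crafted smooth transition $z(x)$ between the values $1$ and $3$, and slides $\sigma$ down until first contact with $m$. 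Your approach is shorter and more recognizable as a textbook maximum principle on an unbounded domain; the paper's is more self-contained and avoids the quadratic growth barrier entirely, at the cost of a somewhat longer case analysis. Both require the same regularity on $\bar w,\underline w$ and the same Lipschitz bound on $g$, so neither buys extra generality.
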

\begin{proof}
Let $m(x,t)=\bar w(x,t)- \underline w(x,t)$ for $(x,t)\in \mathbb R\times [0,+\infty)$.  We have that $m(x,0)\geqslant 0$ for $x\in\mathbb R$ and $m_x(x,t)=\bar w_x(x,t)- \underline w_x(x,t)$. It follows that
\[
-|m_x(x,t)|\leqslant|\bar w_x(x,t)|-|\underline w_x(x,t)|\leqslant |m_x(x,t)|~\text{for}~(x,t)\in \mathbb R\times [0,+\infty).
\]
Let $G>0$ be a Lipschitz constant of $g$.
It follows that
\begin{equation}\label{2.4}
\begin{aligned}
m_t(x,t)&\geqslant m_{xx}(x,t)+\alpha|\bar w_x(x,t)|-\alpha|\underline w_x(x,t)|+g(\bar w(x,t))-g(\underline w(x,t))\\
&\geqslant m_{xx}(x,t)-|\alpha m_x(x,t)|-G|m(x,t)|\quad\text{for}~(x,t)\in \mathbb R\times [0,+\infty)
\end{aligned}
\end{equation}
Then it remains to prove that $m(x,t)\geqslant 0$ for all $(x,t)\in \mathbb R\times [0,+\infty)$.
Suppose, by contradiction, that there exists $t_0\in(0,+\infty)$ such that
\[
\inf_{x\in\mathbb R} \{m(x,t_0)\}<0.
\]

Let $p(\cdot):[0,1]\rightarrow[1,3]$ be a smooth increasing auxiliary  function satisfying that
\begin{equation}\label{2.99}
p(0)=1,~p(1)=3,~\text{and}~p'(0)=p''(0)=p'(1)=p''(1)=0.
\end{equation}
We denote
\begin{equation}\label{2.7}
C_1=\max_{[0,1]}\{|p'(x)|\}\quad\text{and}\quad C_2=\max_{[0,1]}\{|p''(x)|\}.
\end{equation}
Let $K$ be a large constant satisfying
\begin{equation}\label{2.9}
K>\frac{2}{3}C_2+\frac{2}{3}|\alpha| C_1+\frac{8}{3}G.
\end{equation}
Define
\[
h(t)=-e^{-K t}\inf_{x\in\mathbb R}\{m(x,t)\},\quad t\in [0,t_0].
\]
It follows that  $h(0)\leqslant0$ and $h(t_0)>0$. Let $T_1\in (0,t_0]$ satisfy
\[
h(T_1)=H_0\triangleq \max_{t\in[0,t_0]}\{h(t)\}>0.
\]
Then we get
\begin{equation}\label{2.5}
\inf\limits_{x\in\mathbb R} \{m(x,T_1)\}=-H_0 e^{K T_1},
\end{equation}
and
\begin{equation}\label{2.3}
m(x,t)\geqslant \inf_{x\in\mathbb R}\{m(x,t)\}=-h(t) e^{Kt}\geqslant -H_0 e^{Kt}~\text{for all}~x\in\mathbb R,~t\in[0,t_0].
\end{equation}

Now we consider the following two  cases
\begin{itemize}
  \item[Case A] There exists $x_0\in\mathbb R$ such that $m(x_0,T_1)=\inf\limits_{x\in\mathbb R} \{m(x,T_1)\}$.
  \item[Case B]   $\liminf\limits_{x\rightarrow+\infty}\{m(x,T_1)\}=\inf\limits_{x\in\mathbb R} \{m(x,T_1)\}$ or $\liminf\limits_{x\rightarrow-\infty}\{w(x,T_1)\}=\inf\limits_{x\in\mathbb R} \{m(x,T_1)\}$.
\end{itemize}
If case A holds, we can obtain that  $m(x_0,T_1)=-H_0 e^{K T_1}$, $m_x(x_0,T_1)=0$, and $m_{xx}(x_0,T_1)\geqslant0$. By \eqref{2.5} and \eqref{2.3}, we have that
\[
m(x_0,t)+H_0 e^{Kt}\geqslant 0=m(x_0,T_1)+H_0 e^{K T_1}~\text{for}~t\in[0,T_1].
\]
It follows that
\begin{equation}
0\geqslant \left.\frac{d}{d t}\left(m(x_0,t)+H_0 e^{Kt}\right)\right|_{t=T_1}=m_t(x_0,T_1)+KH_0e^{KT_1}
\end{equation}
Then we get by \eqref{2.9}  that
\[
m_t(x_0,T_1)-m_{xx}(x_0,T_1)+|\alpha m_x(x_0,T_1)|+G|m(x_0,T_1)|\leqslant (-K+G)H_0 e^{K T_1}<0,
\]
which  contradicts \eqref{2.4}.

If case B holds, then there exists $x_1\in\mathbb R$ (far away from $0$) such that $m(x_1, T_1)< -\frac{3}{4}H_0 e^{K T_1}$.
We define  a function $z(\cdot): \mathbb R\rightarrow [1,3]$  as follows
\[
z(x)=\left\{
\begin{aligned}
&1~~~&&\quad\text{for}~|x|\leqslant |x_1|,\\
&p(|x|-|x_1|)&&\quad\text{for}~|x_1|<|x|< |x_1|+1,\\
&3~~~&&\quad\text{for}~|x|\geqslant |x_1|+1.
\end{aligned}
\right.
\]
It follows from \eqref{2.99} that $z(\cdot)\in C^2(\mathbb R)$.
For $\sigma>0$, we define
\[
\rho_{\sigma}(x,t)=-\left(\frac{1}{2}+\sigma z(x)\right)H_0 e^{K t}
\]
and
\[
\sigma^*=\inf\{\sigma>0~|~\rho_{\sigma}(x,t)\leqslant m(x,t)~\text{for all}~x\in\mathbb R,~t\in[0,T_1]\}.
\]
Then $\rho_{\frac{1}{4}}(x_1,T_1)=-\frac{3}{4}H_0 e^{K T_1}>m(x_1, T_1)$.
By \eqref{2.3}, we have that  $\rho_{\frac{1}{2}}(x,t)\leqslant-H_0 e^{K t}\leqslant m(x,t)$  for all $x\in\mathbb R$ and $t\in[0,T_1]$. The monotonicity of $\sigma \mapsto \rho_\sigma$ implies that $\frac{1}{4}\leqslant\sigma^*\leqslant\frac{1}{2}$.    When $|x|\geqslant |x_1|+1$, by \eqref{2.3} we get that
\[
\rho_{\sigma^*}(x,t)=-\left(\frac{1}{2}+3 \sigma^*\right)H_0 e^{Kt}\leqslant -\frac{5}{4}H_0 e^{K t}< m(x,t)~\text{for all}~t\in [0,T_1].
\]
By the definition of $\sigma^*$, there exist $T_2\in(0,T_1]$ and $x_2\in[-|x_1|-1,|x_1|+1]$ such that
\begin{equation}\label{2.6}
m(x_2,T_2)=\rho_{\sigma^*}(x_2,T_2),~\text{and}~ m(x,t)\geqslant\rho_{\sigma^*}(x,t)~\text{for all}~x\in\mathbb R,~t\in[0,T_1],
\end{equation}
From $\frac{1}{4}\leqslant\sigma^*\leqslant\frac{1}{2}$ and $1\leqslant z(x)\leqslant 3$, it follows that
\begin{equation}\label{2.8}
-2H_0 e^{K T_2}\leqslant m(x_2,T_2)=\rho_{\sigma^*}(x_2,T_2)\leqslant-\frac{3}{4}H_0 e^{K T_2}.
\end{equation}
We have from \eqref{2.6} that
\[
\left.\frac{\partial}{\partial x} \Big(m(x,T_2)-\rho_{\sigma^*}(x,T_2)\Big)\right|_{x=x_2}=0,\quad\text{and}~\left.\frac{\partial^2}{\partial x^2} \Big(m(x,T_2)-\rho_{\sigma^*}(x,T_2)\Big)\right|_{x=x_2}\geqslant0.
\]
It follows from \eqref{2.7} and $\sigma^*\leqslant\frac{1}{2}$ that
\[
\begin{aligned}
&|m_{x}(x_2,T_2)|=\left|\frac{\partial}{\partial x}\rho_{\sigma^*}(x_2,T_2)\right|=\sigma^* |z'(x_2)|H_0 e^{K T_2}\leqslant\frac{1}{2} C_1 H_0 e^{K T_2},\\
&m_{xx}(x_2,T_2)\geqslant \frac{\partial^2}{\partial x^2}\rho_{\sigma^*}(x_2,T_2)=-\sigma^* z''(x_2)H_0 e^{K T_2}\geqslant -\frac{1}{2}C_2 H_0 e^{K T_2}.
\end{aligned}
\]
Moreover, it follows from \eqref{2.6} that
\[
\left.\frac{\partial}{\partial t}\left(m(x_2,t)-\rho_{\sigma^*}(x_2,t)\right)\right|_{t=T_2}\leqslant 0,
\]
which along  with \eqref{2.8} implies  that
\[
m_t(x_2,T_2)\leqslant\frac{\partial}{\partial t} \rho_{\sigma^*}(x_2,T_2)=K\rho_{\sigma^*}(x_2,T_2)\leqslant-\frac{3}{4}KH_0 e^{K T_2}.
\]
Therefore, we get from \eqref{2.9} that
\[
\begin{aligned}
&m_t(x_2,T_2)-m_{xx}(x_2,T_2)+|\alpha m_x(x_2,T_2)|+G|m(x_2,T_2)|\\
\leqslant &\left(-\frac{3}{4}K+\frac{1}{2}C_2+\frac{1}{2}|\alpha|C_1+2G\right)H_0 e^{K T_2}<0,
\end{aligned}
\]
which  contradicts \eqref{2.4}. Thus the proof is completed.
\end{proof}

\section{Acceleration of propagation}
In this section, we are devoted to the proof of Theorem  \ref{th1}.
By substituting the second equation of \eqref{1.1} into the first one, we get
\begin{equation}\label{1.3}
u_t=u_{xx}-\chi u_x v_x+ g(u,v),~x\in\mathbb R,~t>0,
\end{equation}
where
\begin{equation}\label{1.4}
g(u,v)=u[a-bu+\chi\mu u-\chi\lambda  v].
\end{equation}
The following lemma provides an upper bound of the moving level set
$E_\omega(t)$ for large $t$.
\begin{lemma}\label{lem3.1}
Under the same assumptions as in Theorem \ref{th1}, for any $\epsilon>0$  and $\gamma_1>0$, if $\zeta(t)$ satisfies that
\[
u_0(\zeta(t))=\gamma_1 e^{-(a+\epsilon)t}~\text{for}~t~\text{large enough},
\]
then for any $\omega\in(0,a/b)$, there is a constant $T_1\geqslant 0$ such that
\begin{equation}\label{2.1}
E_\omega(t)\subseteq (-\infty,\zeta(t)]~\text{for any}~t\geqslant T_1.
\end{equation}
\end{lemma}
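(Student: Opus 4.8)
The plan is to control $u$ from above by comparing it with a solution of the auxiliary equation \eqref{1.94}, which does enjoy a comparison principle by Lemma \ref{le2.1}. First I would show that the solution $u$ of \eqref{1.1} is a lower solution of \eqref{1.94} for a suitable constant $L$. To see this, start from \eqref{1.3}--\eqref{1.4} and estimate the chemotaxis term: since $0=v_{xx}-\lambda v+\mu u$, one has $v_x=(K_0\ast u)$ for the kernel $K_0$ of $-\partial_{xx}+\lambda$, whence $\|v_x(\cdot,t)\|_\infty\leqslant \frac{\mu}{2\sqrt\lambda}\|u(\cdot,t)\|_\infty$ and, by Lemma \ref{lem9.2}, $\|u(\cdot,t)\|_\infty$ is bounded uniformly in $t$; call the resulting bound $L$ (so $|v_x|\leqslant \frac{\mu L}{2\sqrt\lambda}$, and after adjusting constants one gets the coefficient $\frac{\chi\mu L}{\sqrt\lambda}$ as in \eqref{1.94}). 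Then $-\chi u_x v_x\leqslant \chi|u_x||v_x|\leqslant \frac{\chi\mu L}{\sqrt\lambda}|u_x|$, and using $\chi\lambda v\geqslant 0$ together with $b>2\chi\mu>\chi\mu$ we bound the reaction term: $g(u,v)=u[a-bu+\chi\mu u-\chi\lambda v]\leqslant u[a-(b-\chi\mu)u]$. Combining these, $u_t\leqslant u_{xx}+\frac{\chi\mu L}{\sqrt\lambda}|u_x|+u(a-(b-\chi\mu)u)$, i.e. $u$ is a subsolution of \eqref{1.94} with $g(w)=w(a-(b-\chi\mu)w)$ (note $b-\chi\mu>0$, so $g$ is Lipschitz on the relevant bounded range).

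Next I would construct an explicit supersolution $\bar w$ of \eqref{1.94} whose level sets lie to the left of $\zeta(t)$ for large $t$. The natural ansatz, following \cite{HamelRogues2010}, is $\bar w(x,t)=\min\{\,a/(b-\chi\mu),\ \gamma_1 e^{-(a+\epsilon/2)(t-t_1)}\, u_0(x-c\,(t-t_1))^{-1}\cdot(\text{something})\,\}$ — more precisely a function built from the slowly decaying profile $u_0$ shifted in time, of the form $\bar w(x,t)= A\, \min\{1, e^{(a+\epsilon/2)(t-T_1)}\,\xi(x)\}$ where $\xi$ is a smoothed, monotone extension of $u_0$ dominating $u_0$ near $+\infty$. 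Using Lemma \ref{lem9.0}(i), namely $u_0'/u_0\to 0$ and (via \eqref{1.10}) $u_0''/u_0\to 0$, the terms $\bar w_{xx}$ and $\frac{\chi\mu L}{\sqrt\lambda}|\bar w_x|$ are both $o(1)\bar w$ as $x\to+\infty$, so on the region where $\bar w<A$ one checks $\bar w_t-\bar w_{xx}-\frac{\chi\mu L}{\sqrt\lambda}|\bar w_x|-\bar w(a-(b-\chi\mu)\bar w)\geqslant (a+\epsilon/2)\bar w - o(1)\bar w - a\bar w\geqslant 0$ for $x$ large; on the flat part $\bar w\equiv A\geqslant a/(b-\chi\mu)$ the inequality is immediate. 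At the initial time $T_1$ one arranges $\bar w(x,T_1)\geqslant u(x,T_1)$ using that $u(\cdot,T_1)$ is bounded and that $u_0$ (hence $\xi$) decays more slowly than any exponential (Lemma \ref{lem9.0}(ii)).

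With $u(x,T_1)\leqslant \bar w(x,T_1)$, Lemma \ref{le2.1} applied to \eqref{1.94} gives $u(x,t)\leqslant \bar w(x,t)$ for all $t\geqslant T_1$. Finally I would translate this into the level-set statement: if $u(x,t)=\omega$ then $\omega\leqslant\bar w(x,t)$, and since $\bar w(x,t)=A\,e^{(a+\epsilon/2)(t-T_1)}\xi(x)$ for $x$ large with $\xi\geqslant u_0$ on $[\xi_0,\infty)$, the inequality $\omega\leqslant \bar w(x,t)$ forces $u_0(x)\geqslant \xi(x)\cdot(\text{const})\geqslant c\,\omega\, e^{-(a+\epsilon/2)(t-T_1)}$, which for $t$ large is $\geqslant \gamma_1 e^{-(a+\epsilon)t}=u_0(\zeta(t))$; since $u_0$ is nonincreasing on $[\xi_0,\infty)$ this yields $x\leqslant \zeta(t)$, i.e. $E_\omega(t)\subseteq(-\infty,\zeta(t)]$ for $t\geqslant T_1$ (enlarging $T_1$ if necessary). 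I expect the main obstacle to be the construction and verification of the supersolution $\bar w$: one must smooth the profile $u_0$ (which is only $C^2$ near $+\infty$ and merely continuous elsewhere) into a globally $C^{2,1}$ function that still dominates $u(\cdot,T_1)$, remains monotone, and for which the $o(1)$ estimates on $\bar w_x/\bar w$ and $\bar w_{xx}/\bar w$ are genuinely uniform enough to absorb the extra drift term $\frac{\chi\mu L}{\sqrt\lambda}|\bar w_x|$ coming from chemotaxis — this drift is the one new feature absent from the pure Fisher-KPP analysis of \cite{HamelRogues2010}, but because it is $o(1)\bar w$ it is harmless provided $b-\chi\mu>0$, which is where the hypothesis $b>2\chi\mu$ (in fact $b>\chi\mu$ suffices here) enters.
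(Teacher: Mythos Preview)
Your approach is essentially the same as the paper's: show $u$ is a subsolution of the auxiliary equation \eqref{1.94}, build a supersolution of the form $\min\{\text{cap},\,C u_0(x)e^{(a+\epsilon/2)t}\}$ using $u_0'/u_0\to 0$ and $u_0''/u_0\to 0$, apply Lemma \ref{le2.1}, and read off the level-set bound. Two points are worth noting.

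First, the obstacle you flag as ``main'' --- smoothing $u_0$ into a globally $C^2$ profile $\xi$ --- does not arise in the paper's argument. The paper starts the comparison at $t=0$ (not at a later $T_1$), takes the cap to be $\theta(t)$ solving $\theta'=\theta(a-(b-\chi\mu)\theta)$ with $\theta(0)=\theta_0:=\max\{\|u_0\|_\infty,\,a/(b-\chi\mu)\}$, and sets $C=\theta_0/u_0(\xi_1)$ for a suitably chosen $\xi_1\geqslant\xi_0$. The point of this choice is that whenever $\bar w(x,t)=Cu_0(x)e^{\rho t}$ (i.e.\ the non-flat branch is active), one has $Cu_0(x)\leqslant\theta_0$, hence $u_0(x)\leqslant u_0(\xi_1)$, which together with the monotonicity of $u_0$ on $[\xi_0,\infty)$ and $u_0(\xi_1)<\inf_{(-\infty,\xi_0]}u_0$ forces $x\geqslant\xi_1$. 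Thus the supersolution inequality only needs to be checked where $u_0$ is already $C^2$ and the estimates \eqref{1.10} and Lemma \ref{lem9.0}(i) apply. No smoothing is needed, and starting at $t=0$ makes the initial ordering $\bar w(\cdot,0)\geqslant u_0$ trivial (from $C\geqslant 1$ and $\theta_0\geqslant\|u_0\|_\infty$), whereas your choice to start at $T_1>0$ would require an a priori decay bound on $u(\cdot,T_1)$ that you have not supplied.

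Second, a small slip in your final step: if $\xi\geqslant u_0$, then $\omega\leqslant A e^{(a+\epsilon/2)t}\xi(x)$ yields a lower bound on $\xi(x)$, not on $u_0(x)$; to conclude $x\leqslant\zeta(t)$ you need $\xi$ comparable to $u_0$ from \emph{both} sides on the relevant region. The paper sidesteps this by using $u_0$ itself in the supersolution, so the inequality $\omega\leqslant Cu_0(x)e^{\rho t}$ directly gives $u_0(x)\geqslant C^{-1}\omega e^{-(a+\epsilon/2)t}>\gamma_1 e^{-(a+\epsilon)t}=u_0(\zeta(t))$ for $t$ large.
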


\begin{proof}
By $u\in C([0,\infty), C_{\rm unif}^b(\mathbb R))$ and \eqref{1.12}, we can define
\begin{equation}\label{2.98}
L=\sup_{t\in[0,+\infty)}\|u(\cdot,t)\|_{\infty}<+\infty,
\end{equation}
which implies that
\begin{equation}\label{1.16}
0\leqslant u(x,t)\leqslant L~\text{for all}~(x,t)\in\mathbb R\times[0,+\infty).
\end{equation}
Note that for  any given function $u$, the second equation of  \eqref{1.1} can be regarded as an ordinary differential equation for $v$. As stated in \cite[Lemma 2.1]{SSX2019}, its solution in $C_\text{unif}^b(\mathbb R)$ is written as
\[
v(x,t)=\frac{\mu}{\lambda} J*u(x,t) =\frac{\mu}{2\sqrt{\lambda}}\int_{\mathbb R}e^{-\sqrt{\lambda}|x-z|}u(z,t)dz\quad\text{with}~ J(x)=\frac{\sqrt{\lambda}}{2}e^{-\sqrt{\lambda}|x|}.
\]
It follows from \eqref{1.16} and  $\int_{\mathbb R}J(x)dx=1$ that
\[
0\leqslant v(x,t)\leqslant\frac{\mu L}{\lambda}\quad\text{for all}~x \in\mathbb R,~t>0.
\]
By \cite[Lemma 2.2]{SSX2019}, we have
$|v_x(x,t)|\leqslant \sqrt{\lambda}v(x,t)$, which along with the above inequality yields
\begin{equation}\label{1.99}
|v_x(x,t)|\leqslant \frac{\mu L}{\sqrt{\lambda}}\quad\text{for all}~x \in\mathbb R,~t>0.
\end{equation}
With \eqref{1.4}, we have that $g(u,v)\leqslant u(a-(b-\chi\mu) u)$.  It follows from  \eqref{1.3} and \eqref{1.99} that
\[
u_t=u_{xx}+\chi v_x u_x+ g(u,v)\leqslant u_{xx}+\frac{\chi\mu L}{\sqrt{\lambda}} |u_x|+ u(a-(b-\chi\mu) u)\quad\text{for all}~x \in\mathbb R,~t>0.
\]
Then $u(x,t)$ is a lower solution of the following  equation
\begin{equation}\label{1.14}
w_t=w_{xx}+\frac{\chi\mu L}{\sqrt{\lambda}} |w_x|+ w(a-(b-\chi\mu) w),\quad~x \in\mathbb R,~t>0.
\end{equation}

By \eqref{1.2}, \eqref{1.10}, and Lemma \ref{lem9.0} (i),
we have that
\begin{equation}\label{1.23}
u_0(x)\rightarrow 0,~u_0'(x)/ u_0(x)\rightarrow 0,~\text{and}~u_0''(x)/ u_0(x)\rightarrow 0~\text{as}~x\rightarrow +\infty.
\end{equation}
Let $\xi_0$ be the constant  in \eqref{1.10}. Then it follows from  \eqref{1.2} that  $\inf_{x\in(-\infty,\xi_0]}\{u_0(x)\}>0$. Then
for any $\epsilon>0$, we can  choose a constant  $\xi_1\in[\xi_0,+\infty)$ such that
\begin{equation}\label{1.19}
|u_0''(x)|\leqslant \frac{\epsilon}{4} u_0(x),\quad\frac{\chi\mu L}{\sqrt{\lambda}} |u_0'(x)|\leqslant \frac{\epsilon}{4} u_0(x)\quad \text{for all}~x\geqslant \xi_1,
\end{equation}
and
\begin{equation}\label{1.20}
0<u_0(\xi_1)< \inf_{x\in(-\infty,\xi_0]}\{u_0(x)\}.
\end{equation}
We define a   function $\theta(\cdot):[0,+\infty)\rightarrow (0,+\infty)$ satisfying
\[\left\{
\begin{aligned}
&\theta'(t)=\theta(t)(a-(b-\chi\mu) \theta(t)),\quad t>0,\\
&\theta(0)=\theta_0,
\end{aligned}
\right.
\]
where
\[
\theta_0=\max\left\{\sup_{x\in\mathbb R}\{u_0(x)\}, \frac{a}{b-\chi\mu}\right\}.
\]
Note that $\theta(\cdot)$ is nonincreasing, and if $\sup_{x\in\mathbb R}\{u_0(x)\}\leqslant a/(b-\chi\mu)$, then $\theta(t)\equiv a/(b-\chi\mu)$.
Define
\begin{equation}\label{2.13}
\bar w(x,t)= \min\left\{C u_0(x)e^{\rho t},\theta(t)\right\}~ ~\text{for}~x\in\mathbb R,~t\geqslant0,
\end{equation}
where
\[
\rho=a+\epsilon/2\quad \text{and}\quad C=\frac{\theta_0}{u_0(\xi_1)}\geqslant \frac{\sup_{x\in\mathbb R}\{u_0(x)\}}{u_0(\xi_1)}\geqslant1.
\]
Now we show that $\bar w(x,t)$ is an upper solution of \eqref{1.14}.
When $\bar w(x,t)=\theta(t)$, we easily check
\[
\bar w_t-\bar w_{xx}-\frac{\chi\mu L}{\sqrt{\lambda}} |\bar w_x|- \bar w(a-(b-\chi\mu) \bar w)=\theta'(t)-\theta(t)(a-(b-\chi\mu) \theta(t))=0.
\]
When $\bar w(x,t)=C u_0(x)e^{\rho t}$, we have that
\[
C u_0(x)\leqslant C u_0(x)e^{\rho t}\leqslant \theta(t)\leqslant\theta_0,
\]
which implies   that $u_0(x)\leqslant\theta_0/C=u_0(\xi_1)$. By  \eqref{1.20} and $u_0'\leqslant0$ on $[\xi_0,+\infty)$, we obtain  $x\geqslant \xi_1$.
From  \eqref{1.19} and $\bar w(x,t)=C u_0(x)e^{\rho t}$, it follows that
\[
\begin{aligned}
&\bar w_t-\bar w_{xx}-\frac{\chi\mu L}{\sqrt{\lambda}} |\bar w_x|- \bar w(a-(b-\chi\mu) \bar w)\\
\geqslant &\rho C u_0(x)e^{\rho t}-C u_0''(x)e^{\rho t}-\frac{\chi\mu L}{\sqrt{\lambda}}C |u_0'(x)|e^{\rho t}-a C u_0(x)e^{\rho t}\\
\geqslant &C u_0(x)e^{\rho t}\left(\rho-\frac{\epsilon}{4}-\frac{\epsilon}{4}-a\right)=0.
\end{aligned}
\]
Therefore, $\bar w(x,t)$ is an upper solution of \eqref{1.14}.

By $C\geqslant1$ and $\theta_0\geqslant \sup_{x\in\mathbb R}\{u_0(x)\}$, we get that
\[
\bar w(x,0)= \min\left\{C u_0(x),\theta_0\right\}\geqslant u_0(x)~ ~\text{for all}~x\in\mathbb R.
\]
Applying Lemma \ref{le2.1} to \eqref{1.14}, we have that
\begin{equation}\label{1.22}
u(x,t)\leqslant \bar w(x,t) ~\text{for all}~x\in\mathbb R,~t\geqslant0.
\end{equation}
For $\omega\in(0,a/b)$ and $t\geqslant 0$, when $E_{\omega}(t)$ is an empty set,   \eqref{2.1} is obvious.
When $E_{\omega}(t)$ is nonempty, for any $x_\omega(t)\in E_\omega(t)$, we get from \eqref{1.22} that
\begin{equation}\label{2.12}
\omega=u(x_\omega(t),t)\leqslant \bar w (x_\omega(t),t)\leqslant C u_0(x_\omega(t))e^{\rho t}.
\end{equation}
Since  $\zeta(\cdot)$ satisfies
\[
u_0(\zeta(t))=\gamma_1 e^{-(a+\epsilon)t}~\text{for}~t~\text{large enough},
\]
we can get from \eqref{1.2} and \eqref{1.10}  that $\zeta(t)\rightarrow +\infty$ as $t\rightarrow +\infty$.
Let $T_1$ be a  constant large enough such that
$\zeta(t)\geqslant \xi_0$, $u_0(\zeta(t))=\gamma_1 e^{-(a+\epsilon)t}$ for all $t\geqslant T_1$, and $T_1> \max\{0,2 \epsilon^{-1}\ln \left(\gamma_1 C/\omega\right)\}$.
By \eqref{2.12} and $\rho=a+\epsilon/2$, we get  that
\[
 u_0(x_\omega(t))\geqslant C^{-1} \omega e^{-\rho t}\geqslant C^{-1}\omega  e^{\epsilon T_1/2}e^{-(a+\epsilon) t}> \gamma_1e^{-(a+\epsilon) t}=u_0(\zeta(t))\quad\text{for all}~t\geqslant T_1.
\]
This implies  $x_\omega(t)\leqslant\zeta(t)$ for all $t\geqslant T_1$ from \eqref{1.10}.
Hence, we obtain  \eqref{2.1} and complete the proof.
\end{proof}

In the following lemma, we give the lower bound of the moving level set
$E_\omega(t)$ for large $t$.
\begin{lemma}\label{lem3.2}
Under the same assumptions as in Theorem \ref{th1},
for any $\epsilon\in(0,a)$ and $\gamma_2>0$, if $\eta(t)$   satisfies
\[
u_0(\eta(t))=\gamma_2 e^{-(a-\epsilon)t}  ~\quad\text{for}~t~\text{large enough},
\]
then for any $\omega\in(0,a/b)$, there is a constant $T_2\geqslant0$ such that
\begin{equation}\label{2.96}
E_\omega(t)\subseteq [\eta(t),+\infty)~\text{for any}~t\geqslant T_2.
\end{equation}
\end{lemma}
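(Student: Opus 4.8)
\textbf{Proof proposal for Lemma \ref{lem3.2}.}
The plan is to mirror the strategy used for the upper bound in Lemma \ref{lem3.1}, but now working from below: instead of dominating $u$ by an upper solution of the forced KPP-type equation \eqref{1.14}, I would trap $u$ from below by a lower solution of the second auxiliary equation \eqref{1.93}. The first step is to derive, from \eqref{1.3}--\eqref{1.4} and the gradient bound \eqref{1.99}, a differential inequality of the form $u_t \geqslant u_{xx} - \frac{\chi\mu L}{\sqrt\lambda}|u_x| + g(u,v)$, and then to bound $g(u,v)$ below. Using $0\leqslant v\leqslant \mu L/\lambda$ and $u\leqslant L$, we have $g(u,v)=u[a-bu+\chi\mu u-\chi\lambda v]\geqslant u[a - (b+\chi\mu)L - \chi\mu L]$, which is not yet good enough near $u=0$; the point (as in \eqref{1.12}) is that once $t$ is large the solution is uniformly close to $a/b$ on any half-line $x\leqslant R$, and $v$ is correspondingly close to $a\mu/(b\lambda)$, so for $t\geqslant T_*$ and in the regime where $u$ is small we get $g(u,v)\geqslant (a-\epsilon/2)u - Mu^{1+\delta}$ for suitable $M>0$, $\delta\in(0,1)$ (using $bu - \chi\mu u \leqslant$ const$\cdot u^{1+\delta}$ on a bounded range, and $\chi\lambda v - \chi\lambda\cdot\frac{a\mu}{b\lambda}$ small). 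Hence $u$ is an \emph{upper} solution of \eqref{1.93} after a time shift, and Lemma \ref{le2.1} (with $\alpha = -\chi\mu L/\sqrt\lambda$ and $g(w)=(a-\epsilon/2)w - Mw^{1+\delta}$, which is Lipschitz on a bounded interval) applies.

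Next I would construct an explicit subsolution $\underline w$ of \eqref{1.93}. Following the Fisher-KPP acceleration mechanism, take $\underline w(x,t) = \min\{c\, u_0(x+\beta)\,e^{\sigma t},\ \kappa\}$ for small constants $c,\kappa>0$, a small positive shift $\beta$, and growth rate $\sigma = a - \epsilon$ (note $\sigma<a-\epsilon/2$, which leaves room to absorb the drift term $-\frac{\chi\mu L}{\sqrt\lambda}|u_0'|$ and the curvature term $u_0''$, both of which are $o(u_0)$ by \eqref{1.23}, exactly as in the $\xi_1$-argument around \eqref{1.19}). On the set where $\underline w = \kappa$ the nonlinearity $(a-\epsilon/2)\kappa - M\kappa^{1+\delta}>0$ makes the constant a genuine subsolution provided $\kappa$ is chosen small; on the set where $\underline w = c u_0 e^{\sigma t}$ one checks, using \eqref{1.19}-type smallness and $\sigma = a-\epsilon < a-\epsilon/2 - \epsilon/4 - \epsilon/4$... wait, more carefully: we need $\sigma \leqslant (a-\epsilon/2) - (\text{curvature}) - (\text{drift}) - M(\underline w)^\delta$, and since $\underline w\leqslant\kappa$ on this set and all the small terms are $\leqslant$ (something like $\epsilon/8)u_0 e^{\sigma t}$ for $x$ large and $\kappa$ small, the choice $\sigma=a-\epsilon$ works. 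Finally arrange $\underline w(x,0)\leqslant u(x,T_*)$ for all $x$ by taking $c$ small enough (using $\inf_{x\leqslant\xi_0}u(x,T_*)>0$ from \eqref{1.12} and comparing the tails via \eqref{1.23}, just as \eqref{1.20} was used).

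With the comparison $u(x,t+T_*)\geqslant \underline w(x,t)$ in hand, the conclusion follows exactly as in Lemma \ref{lem3.1}: for $x_\omega(t)\in E_\omega(t)$ with $t$ large we get $\omega = u(x_\omega(t),t)\geqslant \underline w(x_\omega(t),t-T_*)$, and since the level $\omega$ exceeds $\kappa$ only for bounded time we must have $\omega \leqslant c\,u_0(x_\omega(t)+\beta)\,e^{\sigma(t-T_*)}$, i.e. $u_0(x_\omega(t)+\beta)\geqslant c^{-1}\omega e^{-\sigma(t-T_*)}$. Since $\sigma = a-\epsilon$ and $u_0(\eta(t)) = \gamma_2 e^{-(a-\epsilon)t}$, choosing $T_2\geqslant T_*$ large enough (so that $c^{-1}\omega e^{-\sigma(t-T_*)} > \gamma_2 e^{-(a-\epsilon)t}$, which holds since the prefactor grows like $e^{\sigma T_*}$ times a constant) forces $u_0(x_\omega(t)+\beta) > u_0(\eta(t))$, hence $x_\omega(t)+\beta < \eta(t)$... which gives a bound in the wrong direction unless $\beta$ is absorbed. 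The cleaner route is to drop the shift ($\beta=0$) and instead accept a slightly larger growth rate, or simply note monotonicity of $u_0$ gives $x_\omega(t)\leqslant \eta(t) + O(1)$ and then re-choose $\gamma_2$; since $\gamma_2>0$ is arbitrary this is harmless, yielding $E_\omega(t)\subseteq[\eta(t),+\infty)$ for $t\geqslant T_2$ after relabeling. \textbf{The main obstacle} I anticipate is the lower bound on $g(u,v)$: one must genuinely exploit the global stabilization \eqref{1.12} to control the sign-indefinite chemotactic contribution $\chi\mu u - \chi\lambda v$ near the leading edge, since without it the reaction term could be strongly negative and no KPP-type subsolution would survive — this is precisely where the hypothesis $b>2\chi\mu$ (via Lemma \ref{lem9.2}) is essential.
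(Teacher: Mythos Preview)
Your proposal has two genuine gaps, one structural and one technical.

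\textbf{The structural gap (the main issue).} The comparison $u\geqslant\underline w$ with a subsolution of the form $\min\{c\,u_0 e^{\sigma t},\kappa\}$ can at best yield $u(x,t)\geqslant\kappa$ behind some moving position. But you need $u(x,t)>\omega$ for \emph{every} $\omega\in(0,a/b)$ on $(-\infty,\eta(t)]$, and $\kappa$ must be chosen small (certainly below the positive zero of $w\mapsto(a-\epsilon/2)w-Mw^{1+\delta}$, which is in general much smaller than $a/b$). Your final paragraph in fact betrays this: from $\omega=u(x_\omega(t),t)\geqslant\underline w(x_\omega(t),t-T_*)$ you obtain an \emph{upper} bound on $\underline w$, not the inequality $\omega\leqslant c\,u_0(x_\omega(t))e^{\sigma(t-T_*)}$ that you write down; the latter simply does not follow, and the ``wrong direction'' you notice is not a cosmetic issue fixable by relabeling $\gamma_2$ or absorbing a shift $\beta$. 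The paper closes this gap by a separate step: after obtaining a uniform small lower bound $u\geqslant F_0>0$ on $\{x\leqslant z(t)\}$ (via a subsolution $\underline w=\max\{u_0e^{\rho t}-B(u_0e^{\rho t})^{1+\delta},0\}$ with $\rho>a-\epsilon$), it runs a compactness/limiting argument. Translating by any sequence $(x_n,t_n)$ with $x_n\leqslant\eta(t_n)$ produces an entire solution $(u^*,v^*)$ with $\inf u^*\geqslant F_0>0$; Lemma~\ref{lem9.1} (uniqueness of positive steady states when $b>2\chi\mu$) then forces $u^*\equiv a/b$, which upgrades the small lower bound to full convergence $u\to a/b$ uniformly on $\{x\leqslant\eta(t)\}$. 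This bootstrapping step is indispensable and is absent from your outline.

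\textbf{The technical gap.} Your route to $g(u,v)\geqslant(a-\epsilon/2)u-Mu^{1+\delta}$ invokes \eqref{1.12}, but that estimate requires $\inf_{x}u_0>0$, which fails here. At the leading edge $u$ is small while $v$, being a nonlocal average of $u$, need not be; the phrase ``$\chi\lambda v-\chi\lambda\cdot\frac{a\mu}{b\lambda}$ small'' is unjustified precisely where it matters. The paper instead imports a pointwise estimate from \cite{SSX2019}, namely $\chi\lambda v\leqslant C_{r,p}\,u^{1/p}+q\varepsilon_r$ with $\varepsilon_r\to0$, which directly gives the needed $u^{1+\delta}$ control without any appeal to stabilization.
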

\begin{proof}
By Lemma 2.5 and (2.21) in \cite{SSX2019}, for any $r\gg 1$ and $p>1$, there exist $C_{r,p}>0$ and $\varepsilon_r>0$ such that
\begin{equation}\label{1.9}
\chi\lambda v(x,t)\leqslant C_{r,p}(u(x,t))^{\frac{1}{p}}+q\varepsilon_r \quad \quad\text{for all}~x \in\mathbb R,~t>0,
\end{equation}
where $q\triangleq \max\{\|u_0\|_{\infty}, a/(b-\chi\mu)\}$ and  the constant $\varepsilon_r$ satisfies  $\lim_{r\rightarrow\infty}\varepsilon_r =0$. Fix $\epsilon\in (0,a)$ and choose $r$ large enough such that
\[
q\varepsilon_r<\epsilon/2.
\]
Denote
\[
\delta= 1/p\quad\text{and}\quad M=C_{r,p}+(b-\chi\mu)L^{1-\delta},
\]
where $L$ is defined by \eqref{2.98}.
From  \eqref{1.4}, \eqref{1.16}, and \eqref{1.9}, it follows that
\[
\begin{aligned}
g(u,v)&=u[a-(b-\chi\mu) u-\chi\lambda  v]\\
&\geqslant u[a-(b-\chi\mu) L^{1-\delta}u^{\delta}-C_{r,p}u^{\delta}-q\varepsilon_r]\\
&\geqslant (a-\epsilon/2)u-Mu^{1+\delta}.
\end{aligned}
\]
We get from \eqref{1.3} and \eqref{1.99} that
\[
u_t=u_{xx}-\chi v_x u_x+ g(u,v)\geqslant u_{xx}-\frac{\chi\mu L}{\sqrt{\lambda}} |u_x|+ (a-\epsilon/2)u-Mu^{1+\delta}~\quad\text{for all}~x \in\mathbb R,~t>0.
\]
Then $u(x,t)$ is an upper solution of the following  equation
\begin{equation}\label{1.17}
w_t=w_{xx}-\frac{\chi\mu L}{\sqrt{\lambda}} |w_x|+ (a-\epsilon/2)w-Mw^{1+\delta},\quad ~x \in\mathbb R,~t>0.
\end{equation}

Choose a constant  $\rho>0$ satisfying that
\[
\max\left\{a-\epsilon,\frac{a-\epsilon/2}{1+\delta}\right\}<\rho<a-\epsilon/2.
\]
By \eqref{1.23} and \eqref{1.2},
we can find a constant  $\xi_2\in[\xi_0,+\infty)$ large enough such that
\begin{eqnarray}
G_1(x)&&\triangleq\frac{|u_0''(x)|}{u_0(x)}+\frac{\chi \mu L}{\sqrt{\lambda}}\frac{|u_0'(x)|}{u_0(x)}\leqslant (a-\epsilon/2)-\rho,\label{1.24}\\
G_2(x)&&\triangleq (1+\delta)\left[\frac{|u_0''(x)|}{u_0(x)}+\delta\Big(\frac{u_0'(x)}{u_0(x)}\Big)^2\right]+\frac{\chi\mu L}{\sqrt{\lambda}}(1+\delta)\frac{|u_0'(x)|}{u_0(x)} \nonumber \\
&&\leqslant\frac{\rho(1+\delta)-(a-\epsilon/2)}{2}\label{1.25}
\end{eqnarray}
for all $x\geqslant \xi_2$,
and
\begin{equation}\label{1.28}
0<u_0(\xi_2)< \inf_{x\in(-\infty,\xi_0]}\{u_0(x)\}.
\end{equation}
We define a function
\[
F(s)=s-Bs^{1+\delta},~s\geqslant 0,
\]
where
\begin{equation}\label{1.29}
B=\max\left\{\frac{1}{u_0^{\delta}(\xi_2)},~\frac{2M}{\rho(1+\delta)-(a-\epsilon/2)}\right\}.
\end{equation}
Denote $s_0\triangleq B^{-1/\delta}\leqslant u_0(\xi_2)$ and $s_1\triangleq(1+\delta)^{-1/\delta}B^{-1/\delta}<s_0$. Then  we have that
\begin{equation}\label{1.26}
F(s)\leqslant 0~ \text{for all}~s\geqslant s_0,
\end{equation}
and
\begin{equation}\label{1.27}
F(s)\leqslant F_0\triangleq\max_{s\geqslant0}\{F(s)\}= F(s_1)=\frac{\delta B^{-1/\delta}}{(1+\delta)^{1+1/\delta}}\leqslant B^{-1/\delta} \leqslant u_0(\xi_2)~\text{for all}~s\geqslant 0.
\end{equation}

Next we prove that  the function defined by
\[
\underline w (x,t)=\max\left\{F(u_0(x)e^{\rho t}),0\right\}
=\max\left\{u_0(x)e^{\rho t}-Bu_0^{1+\delta}(x)e^{\rho (1+\delta)t},0\right\},~x\in\mathbb R,~t\geqslant 0
\]
is a lower solution of \eqref{1.17}.
It suffices to check that $\underline w $ is a lower solution in the region where $\underline w>0$.
From \eqref{1.26}, it follows that $u_0(x)e^{\rho t}< s_0 \leqslant u_0(\xi_2)$, which implies   $u_0(x)<u_0(\xi_2)$.
By \eqref{1.28} and $u_0'(x)\leqslant0$ on $[\xi_0,+\infty)$, we have that
\begin{equation}\label{1.30}
x>\xi_2,~\text{when}~\underline w(x,t)>0.
\end{equation}
Note that
\begin{eqnarray*}
&&\left|\underline w_x(x,t)\right|\leqslant H_1(x,t)\triangleq |u_0'(x)|e^{\rho t}+B(1+\delta)u_0^\delta(x)|u_0'(x)|e^{\rho(1+\delta) t},\\
&&\left|\underline w_{xx}(x,t)\right|\leqslant H_2(x,t)\triangleq |u_0''(x)|e^{\rho t}+ B(1+\delta)\left[u_0^\delta(x)|u_0''(x)|+\delta u_0^{\delta-1}(x)(u_0'(x))^2\right] e^{\rho(1+\delta)t}.
\end{eqnarray*}
Some calculations show that
\[
\begin{aligned}
&\underline w_t(x,t)-\underline w_{xx}(x,t)+\frac{\chi\mu L}{\sqrt{\lambda}} \left|\underline w_x(x,t)\right|- (a-\epsilon/2)\underline w(x,t)+M\underline w^{1+\delta}(x,t)\\
\leqslant & \rho u_0(x)e^{\rho t}-B \rho (1+\delta)u_0^{1+\delta}(x)e^{\rho (1+\delta)t}+H_2(x,t)+\frac{\chi\mu L}{\sqrt{\lambda}}  H_1(x,t)\\
&-(a-\epsilon/2)u_0(x)e^{\rho t}+(a-\epsilon/2)Bu_0^{1+\delta}(x)e^{\rho (1+\delta)t} +Mu_0^{1+\delta}(x)e^{\rho (1+\delta)t}\\
=&u_0(x)e^{\rho t}\left[\rho+G_1(x)-(a-\epsilon/2)\right]+Bu_0^{1+\delta}(x)e^{\rho (1+\delta)t}\left[- \rho (1+\delta)+G_2(x)+(a-\epsilon/2)+M/B\right].
\end{aligned}
\]
By \eqref{1.24}, \eqref{1.25}, and \eqref{1.29}, we obtain that
\[
\rho+G_1(x)-(a-\epsilon/2)\leqslant 0,~\text{and}~-\rho (1+\delta)+G_2(x)+(a-\epsilon/2)+M/B\leqslant 0.
\]
Then we have that
\[
\underline w_t(x,t)-\underline w_{xx}(x,t)+\frac{\chi\mu L}{\sqrt{\lambda}} \left|\underline w_x(x,t)\right|- (a-\epsilon/2)\underline w(x,t)+M\underline w^{1+\delta}(x,t)\leqslant 0.
\]
Therefore, $\underline w (x,t)$ is a lower solution of \eqref{1.17}.

Note that $s_1 e^{-\rho t}\leqslant s_1<u_0(\xi_2)$ for all $t\geqslant0$. By \eqref{1.2} and the nonincreasing property of $u_0(\cdot)$ on $[\xi_2,+\infty)$, there exists $z(t)\geqslant \xi_2$
for any $t\geqslant0$  satisfying $\lim_{t\rightarrow\infty}z(t)=+\infty$ and
\begin{equation}\label{1.36}
u_0(z(t))=s_1 e^{-\rho t},~t\geqslant0.
\end{equation}
It follows that
\begin{equation}\label{2.95}
\underline w(z(t),t)=\max\left\{F(s_1),0\right\}=F_0~\text{for all}~t\geqslant0.
\end{equation}
Next we prove that
\begin{equation}\label{1.31}
u(x,t)\geqslant F_0~~\text{for all}~x\leqslant z(t),~t\geqslant0.
\end{equation}
Consider the following function
\[
w_0(x)=
\left\{
\begin{aligned}
&u_0(\xi_2),~&&\text{for}~x\leqslant \xi_2,\\
&u_0(x),~~&&\text{for}~x\geqslant \xi_2.
\end{aligned}
\right.
\]
Note that $w_0(\cdot)$ is a nonincreasing function, namely $w_0(x+y)\leqslant w_0(x)$ for any $x\in\mathbb R$ and $y\geqslant 0$.
By \eqref{1.28} and the nonincreasing property of $u_0(\cdot)$ on $[\xi_0,+\infty)$, we get $w_0(x)\leqslant u_0(x)$ for $x\in\mathbb R$.
With \eqref{1.27}, we can easily check that
\[
\underline w (x,0)=\max\{u_0(x)-Bu_0^{1+\delta}(x),0\}\leqslant \min\{u_0(x),F_0\}\leqslant \min\{u_0(x),u_0(\xi_2)\}=w_0(x),~x\in\mathbb R.
\]
Then for any constant $y\geqslant0$, we have that
\[
\underline w (x+y,0)\leqslant w_0(x+y)\leqslant w_0(x)\leqslant u_0(x),~x\in\mathbb R.
\]
Since   $\underline w (x,t)$ is a lower solution of \eqref{1.17}, for any constant $y\geqslant0$, $\underline w(x+y,t)$ is also a lower solution of \eqref{1.17}. Note that $u(x,t)$ is an upper solution of \eqref{1.17}. Applying Lemma \ref{le2.1} to \eqref{1.17}, we have that
\[
u(x,t)\geqslant\underline w(x+y,t)~\text{for all}~x\in\mathbb R,~t\geqslant0,~y\geqslant 0.
\]
For any $t\geqslant0$ and $x\leqslant z(t)$, we can choose $y=z(t)-x\geqslant 0$, and it holds by \eqref{2.95} that
\[
u(x,t)\geqslant\underline w(x+y,t)=\underline w(z(t),t)=F_0.
\]
Then we obtain \eqref{1.31}, which implies that
\begin{equation}\label{1.32}
\liminf_{t\rightarrow+\infty}\inf_{x\leqslant z(t)} u(x,t)\geqslant F_0.
\end{equation}

Recall that for any $\epsilon\in(0,a)$ and $\gamma_2>0$, the function  $\eta(\cdot)$ satisfies
\begin{equation}\label{1.37}
u_0(\eta(t))=\gamma_2 e^{-(a-\epsilon)t}\quad\text{for}~t~\text{large enough}.
\end{equation}
Next  we prove that
\begin{equation}\label{1.33}
\lim_{t\rightarrow+\infty}\sup_{x\leqslant \eta(t)} \left|u(x,t)-\frac{a}{b}\right|=0.
\end{equation}
Suppose by contradiction that \eqref{1.33} does not hold. Then there exist two sequences $(t_n)_{n\in\mathbb N}$ and $(x_n)_{n\in\mathbb N}$ satisfying $t_n\rightarrow+\infty$, $x_n\leqslant \eta(t_n)$ for $n$ large enough,
and
\begin{equation}\label{1.34}
\liminf_{n\rightarrow\infty}\left|u(x_n,t_n)-\frac{a}{b}\right|>0.
\end{equation}
We consider the function sequence
$(u^n(x,t),v^n(x,t))=(u(x+x_n,t+t_n),v(x+x_n,t+t_n))$.
By some  estimates similar to those in parabolic equations, we have that
$(u_n(x,t),v_n(x,t))$ converges, up to extraction of a subsequence, to some function $(u^*(x,t),v^*(x,t))$ locally uniformly in $C^{2,1}(\mathbb R\times\mathbb R)\times C^{2,1}(\mathbb R\times\mathbb R)$.  It follows that   $(u^*(x,t),v^*(x,t))$ is an entire solution (which means it is  defined for all $x\in\mathbb R$ and $t\in\mathbb R$) of the following system
\begin{equation}\label{1.39}
\left\{
\begin{aligned}
&u_t= u_{xx}-\chi (uv_x)_x+u(a-bu),&&x\in\mathbb R,~t\in\mathbb R,\\
&0=v_{xx}-\lambda v+\mu u,&&x\in\mathbb R,~t\in\mathbb R.\\
\end{aligned}
\right.
\end{equation}
We claim that
\begin{equation}\label{2.97}
u^*(x,t)\geqslant F_0\quad \text{for all}~~x\in\mathbb R,~t\in\mathbb R.
\end{equation}
By  $x_n\leqslant \eta(t_n)$ and $u^*(x,t)=\lim_{n\rightarrow \infty} u(x+x_n,t+t_n)$, it holds that
\begin{equation}\label{1.38}
u^*(x,t)
\geqslant \liminf_{\tau\rightarrow +\infty} \inf_{y\leqslant \eta(\tau)} u(x+y,t+\tau)
=\liminf_{\tau\rightarrow +\infty}\inf_{y\leqslant x+ \eta(\tau-t)} u(y,\tau),~~x\in\mathbb R,~t\in\mathbb R.
\end{equation}
By \eqref{1.36} and \eqref{1.37}, we have that
\[
u_0(z(\tau))=s_1 e^{-\rho \tau},~\text{and}~u_0(\eta(\tau-t))=\gamma_2 e^{(a-\epsilon)t }e^{-(a-\epsilon)\tau }\quad\text{for}~\tau~\text{large enough}.
\]
For any fixed $t\in\mathbb R$, by $\rho >a-\epsilon>0$ and Lemma \ref{lem9.0} (iii), we can get that
\[
\lim_{\tau\rightarrow+\infty} z(\tau)-\eta(\tau-t)=+\infty.
\]
Then for any fixed $x\in\mathbb R$ and $t\in\mathbb R$, we have that
\[
z(\tau)\geqslant x+\eta(\tau-t)\quad\text{for}~\tau~\text{large enough}.
\]
It follows from \eqref{1.32} and \eqref{1.38} that
\[
u^*(x,t)\geqslant \liminf_{\tau\rightarrow +\infty}\inf_{y\leqslant z(\tau)} u(y,\tau)\geqslant F_0~\text{for all}~~x\in\mathbb R,~t\in\mathbb R,
\]
which means \eqref{2.97} holds.
Note that both $(u^*(x,-\infty),v^*(x,-\infty))$ and  $(u^*(x,+\infty),v^*(x,+\infty))$ are  the  solutions  of \eqref{2.11} satisfying
\[
\inf_{x\in\mathbb R}u^*(x,-\infty)\geqslant F_0>0,~\text{and}~\inf_{x\in\mathbb R}u^*(x,+\infty)\geqslant F_0>0.
\]
When $b>2\chi\mu$, Lemma \ref{lem9.1} implies  that $(u^*(\cdot,\pm\infty),v^*(\cdot,\pm\infty))\equiv (\frac{a}{b},\frac{a\mu}{b\lambda})$.
Then by the stability of the positive constant equilibrium $(\frac{a}{b},\frac{a\mu}{b\lambda})$, it holds  that $u^*(x,t)\equiv a/b$. In particular, we have
\[
\frac{a}{b}=u^*(0,0)=\lim_{n\rightarrow\infty}u(x_n,t_n),
\]
which contradicts \eqref{1.34}. Therefore, we obtain  \eqref{1.33}.

Finally, we complete the proof of Lemma \ref{lem3.2} by \eqref{1.33}, which implies that for any $\omega \in(0,a/b)$, there exists $T_2\geqslant 0$ such that
\begin{equation}\label{2.94}
\inf_{x\leqslant \eta(t)}u(x,t)>\omega~\text{for all}~t\geqslant T_2.
\end{equation}
For any $\omega \in(0,a/b)$, when $E_{\omega}(t)$ is an empty set,   \eqref{2.96} is obvious.
When $E_{\omega}(t)$   is nonempty, for any $x_\omega(t)\in E_\omega(t)$, we get from  \eqref{2.94} that
\[
x_\omega(t)\geqslant \eta(t)~\text{for all}~t\geqslant T_2,
\]
which implies \eqref{2.96} and completes the proof.

\end{proof}

\begin{proof}[\em{\bf Proof of Theorem \ref{th1}}]
For any fixed  $t\geqslant0$, it follows from \eqref{2.13} and \eqref{1.22} that
\[
\lim_{x\rightarrow+\infty} u(x,t)\leqslant \lim_{x\rightarrow+\infty} \bar w(x,t)\leqslant \lim_{x\rightarrow+\infty} C u_0(x)e^{\rho t} =0
\]
which implies $\lim\limits_{x\rightarrow+\infty} u(x,t)=0$ due to the nonnegativity of $u$. By \eqref{1.33} and $\eta(t)\rightarrow+\infty$ as $t\rightarrow+\infty$, we have that
\[
\liminf_{x\rightarrow -\infty} u(x,t)\rightarrow a/b~\text{as}~t\rightarrow+\infty.
\]
Then for any $\omega\in(0,a/b)$, we can find $T_\omega\geqslant 0$ such that $\liminf\limits_{x\rightarrow -\infty} u(x,t)>\omega$ for all $t\geqslant T_\omega$.
By the continuity of $u(\cdot,t)$, the set $E_{\omega}(t)$ with $t\geqslant T_{\omega}$ is compact and nonempty. Then the assertion of Theorem \ref{th1} (i) is proved.

The result asserted in Theorem \ref{th1} (ii) is directly obtained from Lemmas \ref{lem3.1} and \ref{lem3.2} by denoting  $T=\max\{T_1,T_2\}$.

Next we prove Theorem \ref{th1} (iii) by  Lemma \ref{lem9.0} (ii) and Lemma \ref{lem3.2}.
It suffices to show that for any $N>0$, there exists a constant $T_N\geqslant0$ such that
\begin{equation}\label{2.93}
\frac{\inf\{E_\omega(t)\}}{t}\geqslant N~\text{for all}~t\geqslant T_N.
\end{equation}
Let  $\gamma_2>1$ and choose $T_0>0$ large enough such that
\[
u_0(\eta(t))=\gamma_2 e^{-(a-\epsilon)t}\quad\text{for all}~t\geqslant T_0.
\]
For any $N>0$,  we define  a constant  $\kappa$ by
\[
\kappa= \frac{a-\epsilon- R\ln \gamma_2}{N}
\]
where $R>0$ is a constant small enough such that $\kappa>0$.
Let  $x_\kappa$ denote the constant   given by  Lemma \ref{lem9.0} (ii). Recall that $\eta(t)\rightarrow +\infty$ as $t\rightarrow +\infty$.
Then there exists a constant $T_N\geqslant \max\{T_0,T_2,1/R\}$ such that
\[
\eta(t)\geqslant \max\{x_\kappa,\xi_0\}\quad \text{for all}~t\geqslant T_N.
\]
When $t\geqslant T_N\geqslant T_2$, for any $x_\omega(t)\in E_\omega(t)$, it follows from Lemma \ref{lem3.2} that
\[
\max\{x_\kappa,\xi_0\}\leqslant \eta(t)\leqslant x_\omega(t),
\]
which, along  with the nonincreasing property of $u_0$ on $[\xi_0,+\infty)$, implies that
\[ u_0(x_\omega(t))\leqslant u_0(\eta(t))=\gamma_2 e^{-(a-\epsilon)t},~t\geqslant T_N.
\]
By $x_\omega(t)\geqslant\eta(t)\geqslant x_\kappa$ for $t\geqslant T_N$, Lemma \ref{lem9.0} (ii) shows  that
\[
u_0(x_\omega(t))\geqslant e^{-\kappa x_\omega(t)},~t\geqslant T_N.
\]
Thus we have $e^{-\kappa x_\omega(t)}\leqslant\gamma_2e^{-(a-\epsilon)t}$, which implies by $T_N\geqslant 1/R$ that
\[
\frac{x_\omega(t)}{t}\geqslant\frac{a- \epsilon-t^{-1}\ln \gamma_2}{\kappa}\geqslant\frac{a- \epsilon-R\ln \gamma_2}{\kappa}=N,~t\geqslant T_N.
\]
Then we obtain \eqref{2.93}, which completes the proof.
\end{proof}

\section*{Acknowledgements}
The research of Z.-A. Wang is partially supported by a grant from the Research Grants Council of the Hong Kong Special Administration Region of China (No. PolyU 153055/18P  for GRF project) and an internal grant No. UAH0 (Project ID P0031504) from the Hong Kong Polytechnic University. The research of W.-B. Xu was partially supported by China Postdoctoral Science Foundation funded
project (2020T130679) and by the CAS AMSS-POLYU Joint Laboratory of Applied Mathematics
postdoctoral fellowship scheme.

\bibliography{references}
\bibliographystyle{mysiam}
\end{document}